\documentclass[12pt]{amsart}
\usepackage{epsfig,color}

\headheight=6.15pt \textheight=8in \textwidth=6.5in
\oddsidemargin=0in \evensidemargin=0in \topmargin=0in

\setcounter{section}{-1}
\theoremstyle{definition}

\newtheorem{theorem}{Theorem}[section]

\newtheorem{proposition}[theorem]{Proposition}
\newtheorem{lemma}[theorem]{Lemma}
\newtheorem{remark}[theorem]{Remark}

\numberwithin{equation}{section}

\newcommand{\ee}{{\text{e}}}
\newcommand{\hps}{{\text{hypersurface}}}
\newcommand{\sk}{{\text{self-shrinker}}}

\newcommand{\lb}[1]{\langle#1\rangle}

\newcommand{\mf}{\mathbf}

\title{On the Rigidity of Mean Convex Self-shrinkers}

\author{Qiang Guang}
\address{Department of Mathematics, Massachusetts Institute of Technology, Cambridge, MA  02139, USA}
\email{qguang@math.mit.edu}

\author{Jonathan J. Zhu}
\address{Department of Mathematics,
Harvard University, Cambridge, MA 02138, USA}
\email{jjzhu@math.harvard.edu}

\begin{document}

\begin{abstract}


Self-shrinkers model singularities of the mean curvature flow; they are defined as the special solutions that contract homothetically under the flow. Colding-Ilmanen-Minicozzi showed that cylindrical self-shrinkers $\mf{S}^k\times \mf{R}^{n-k}$ are rigid in a strong sense - that is, any self-shrinker that is mean convex with uniformly bounded curvature on a large, but compact, set must be a round cylinder. Using this result, Colding and Minicozzi were able to establish uniqueness of blowups at cylindrical singularities, and provide a detailed description of the singular set of generic mean curvature flows. 

In this paper, we show that the bounded curvature assumption is unnecessary for the rigidity of the cylinder if either $n\leq 6$, or if the mean curvature is bounded below by a positive constant. These results follow from curvature estimates that we prove for strictly mean convex self-shrinkers. We also obtain a rigidity theorem in all dimensions for graphical self-shrinkers, and curvature estimates for translators of the mean curvature flow. 

\end{abstract}
\maketitle


\section{Introduction}

Mean curvature flow (``MCF") is an evolution equation where a one-parameter family of hypersurfaces $M_t \subset \mf{R}^{n+1}$ flows by mean curvature, that is, it satisfies 
\begin{equation}
(\partial_t x )^\bot = -H\mf{n},
\end{equation}
where $x$ is the position vector, $H$ is the mean curvature and $\mf{n}$ is the outward unit normal. MCF is the negative gradient flow of the area functional. 

We  call a hypersurface $\Sigma^n \subset \mathbf{R}^{n+1}$ a \textit{self-shrinker}, or more simply a \textit{shrinker}, if it satisfies  \begin{equation}\label{eq:shrinker}
H = \frac{1}{2}\langle x,\mathbf{n}\rangle.
\end{equation}
It is easy to see that a self-shrinker is  the $t=-1$ time-slice of a mean curvature flow that evolves by shrinking homothetically to the origin $x=0$. 
The simplest examples in $\mf{R}^{n+1}$ are generalized cylinders $\mathbf{S}^k \times \mathbf{R}^{n-k}$. Here, and henceforth, $\mathbf{S}^k$ denotes the round sphere of radius $\sqrt{2k}$. When $n=1$, the only smooth complete embedded self-shrinkers are straight lines through the origin, and the circle of radius $\sqrt{2}$ (see \cite{AbL}). Therefore, we will always assume $n \geq 2$; in these higher dimensions, there are many more self-shrinkers (see for instance \cite{Ang}, \cite{Chopp} and \cite{KKM}).

By the combined work of Huisken \cite{Hui90}, Ilmanen \cite{I1} and White \cite{W6}, singularities of MCF are modeled by self-shrinkers. As such, one of the most important questions in the study of MCF is to classify the possible singularities. The first major result is due to Huisken (\cite{Hui90}, \cite{Hui93}), who showed that the only smooth complete embedded self-shrinkers in $\mf{R}^{n+1}$ with $H\geq 0$, polynomial volume growth and $|A|$ bounded are generalized cylinders $\mf{S}^k \times \mf{R}^{n-k}$. Later, Colding-Minicozzi \cite{CM1} were able to remove the assumption of bounded curvature $|A|$. Consequently, they showed that the only generic shrinkers are the generalized cylinders $\mf{S}^k \times \mf{R}^{n-k}$, in the sense that all others can be perturbed away. 

In this paper we consider self-shrinkers that are either mean convex or graphical on compact sets. Our first main result is the following local curvature estimate for uniformly mean convex shrinkers:

\begin{theorem}\label{thm:curv2}
Given $n$ and $\delta>0$, there exists $C=C(n, \delta)$ so that for any smooth properly embedded self-shrinker $\Sigma^n \subset \mf{R}^{n+1}$  
which satisfies
\begin{itemize}
\item[$(\star)$] $H \geq \delta$ on $ B_R \cap \Sigma$ for  $R>2$,
\end{itemize}
we have
\begin{equation}
|A|(x) \leq  \frac{CR}{R - |x|} H(x),\,\,\,\,\,\,\, \text{for all}\,\,\,x\in B_{R-1}\cap \Sigma.
\end{equation}
\end{theorem}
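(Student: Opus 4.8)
The plan is to run a standard Colding--Minicozzi-style maximum principle argument on the quantity $|A|^2/H^2$ (or equivalently $\log|A| - \log H$), exploiting the hypothesis $(\star)$ that $H$ is bounded strictly away from zero on a large ball. First I would record the relevant drift equations. Writing $\mathcal{L} = \Delta - \tfrac12 \langle x, \nabla \cdot\rangle$ for the shrinker drift Laplacian, the self-shrinker equation gives the Simons-type identities $\mathcal{L} H = H - \tfrac12 H$ (more precisely $\mathcal{L} H = \tfrac12 H - |A|^2 H$, up to the normalization convention in the paper) and $\mathcal{L} A = \tfrac12 A - |A|^2 A$ at the level of the second fundamental form, hence $\mathcal{L}|A|^2 = 2|\nabla A|^2 + |A|^2 - 2|A|^4$. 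Because $H > 0$ on $B_R \cap \Sigma$, the function $u = |A|^2/H^2$ is well-defined and smooth there, and a direct computation yields a drift inequality of the form
\begin{equation}
\mathcal{L} u \geq 2\left\langle \nabla \log H^2, \nabla u\right\rangle + \frac{2}{H^2}\left(|\nabla A|^2 - \frac{|A|^2}{H^2}|\nabla H|^2\right),
\end{equation}
where the gradient terms on the right are nonnegative by the Kato-type inequality $|\nabla A|^2 \geq |A|^{-2}|A|^2|\nabla H|^2$ once one accounts correctly for the trace; the key point is that the ``bad'' curvature terms $|A|^2$ and $|A|^4$ cancel in the ratio. So $u$ is a subsolution of a drift equation with a gradient term whose coefficient, $\nabla \log H^2$, is controlled precisely because $(\star)$ bounds $H$ below.

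The next step is to localize. Let $\phi$ be a cutoff supported in $B_R \cap \Sigma$ that is $1$ on $B_{R-1}$, chosen (as in Colding--Minicozzi) as a function of $|x|$ with $|\nabla \phi| \lesssim 1/R$ and $|\mathcal{L}\phi| \lesssim 1$; note the drift term $-\tfrac12\langle x,\nabla\phi\rangle$ is automatically bounded on $B_R$. Consider $g = \eta^2 u$ where $\eta$ is a suitable power of a slightly sharper cutoff reflecting the desired blow-up rate $R/(R-|x|)$ — i.e. a Schoen-type cutoff so that $\eta(x) \to 0$ as $|x| \to R$ and the final bound reads $\eta^2 u \leq C$. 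At an interior maximum $x_0$ of $g$ we have $\nabla g = 0$, so $\nabla u = -2u \nabla \log \eta$ at $x_0$, and $\mathcal{L} g \leq 0$. Expanding $\mathcal{L} g = \eta^2 \mathcal{L} u + u\,\mathcal{L}(\eta^2) + 2\langle \nabla(\eta^2), \nabla u\rangle$ and substituting the drift inequality for $\mathcal{L} u$, the troublesome first-order term $\langle \nabla \log H^2, \nabla u\rangle$ becomes $-2u\langle\nabla\log H^2,\nabla\log\eta^2\rangle$, which is bounded by $C u (1 + |\nabla \log \eta|^2)\cdot\|\nabla\log H\|$; here I would need an a priori bound on $|\nabla \log H| = |\nabla H|/H$, which follows from $u$ itself (since $|\nabla H| \leq |A|\,|\nabla A|$ is not quite enough — one needs instead the rougher estimate $|\nabla H|^2 \leq |A|^2 |A|^2 = |A|^4$... ) — and this is where the argument genuinely needs care.

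The main obstacle, as flagged above, is closing the loop on the gradient-of-$H$ term: a naive bound on $|\nabla\log H|$ would itself require controlling $|A|/H$, which is the very quantity we are estimating. The resolution is to absorb: after multiplying through by $\eta^2$ and evaluating at $x_0$, every occurrence of $|\nabla \log H|$ appears multiplied by at least one factor that can be paired against the good term $\eta^2 H^{-2}|\nabla A|^2$ via Cauchy--Schwarz with a small constant, or against $\tfrac12$ from the drift term $\mathcal{L}u \geq \cdots + \tfrac12 u - \cdots$ — wait, in this normalization the zeroth-order term in $\mathcal{L}u$ actually has the right sign: one checks $\mathcal{L}u \geq 2\langle\nabla\log H^2,\nabla u\rangle + cu^2 - Cu$ type behavior is \emph{not} what happens; rather the clean cancellation leaves $\mathcal{L}u \geq 2\langle \nabla\log H^2,\nabla u\rangle$ plus nonnegative terms, with no destabilizing $+u^2$. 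Consequently at $x_0$ one obtains $0 \geq \mathcal{L}g \geq -C(1 + \delta^{-1})(1 + R^2|\nabla\log\eta|^2) u - C u^2 \cdot(\text{small})$... the precise bookkeeping is routine once the structure is right, and yields $\eta^2(x_0) u(x_0) \leq C(n,\delta)$, hence $u(x) \leq C(n,\delta) R^2/(R-|x|)^2$ on $B_{R-1}\cap\Sigma$, which is the claimed estimate after taking square roots. I expect roughly one page of careful but unsurprising computation to nail down the constants and the choice of cutoff exponent; the only conceptual input beyond Colding--Minicozzi's method is the observation that $(\star)$ converts the otherwise uncontrolled logarithmic derivative of $H$ into something absorbable.
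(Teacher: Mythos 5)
Your overall strategy---a localized maximum principle on a quantity comparing $|A|$ to $H$, driven by the identities $LH=H$ and $\mathcal{L}|A|^2=|A|^2-2|A|^4+2|\nabla A|^2$---is the right family of argument and is indeed what the paper does, following Ecker--Huisken. But the specific quantity you chose, the plain ratio $u=|A|^2/H^2$, cannot close the argument, and this is a genuine gap rather than ``routine bookkeeping.'' The cancellation of the $|A|^2$ and $|A|^4$ terms that you celebrate is a double-edged sword: after correctly grouping the gradient terms one finds only that $u=|\tau|^2$ with $\tau=A/H$ satisfies $\mathcal{L}u+\langle\nabla\log H^2,\nabla u\rangle=2|\nabla\tau|^2\geq 0$, a pure subsolution inequality with \emph{no} superlinear good term on the right. (Your claimed nonnegativity of $|\nabla A|^2-\tfrac{|A|^2}{H^2}|\nabla H|^2$ via a Kato-type inequality is false precisely in the regime $|A|\gg H$ that you are trying to rule out; the correct nonnegative completion is $2|\nabla\tau|^2$, which requires the cross term you dropped.) Consequently, at the interior maximum of $\eta^2u$ every term in your final inequality is \emph{linear} in $u(x_0)$ --- you obtain $0\geq -C\,u(x_0)\cdot(\text{cutoff quantities})$, which is vacuous. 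A Schoen-type cutoff does not rescue this: those arguments rely on a superquadratic term (e.g.\ $|A|^4$ from a Simons/stability inequality), and here the whole point of passing to the ratio is that such terms cancel. For the same reason your proposed absorption of $|\nabla\log H|$ against $\eta^2H^{-2}|\nabla A|^2$ fails: that term is already fully consumed in making the right-hand side nonnegative.

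The missing idea---and the heart of the paper's proof---is to use $(\star)$ not merely to make $1/H$ bounded but to deform the test function nonlinearly: set $v=1/H$, $k=\delta^2/2$, $h(y)=y/(1-ky)$, and $f=|A|^2h(v^2)$ (so $1-kv^2\geq 1/2$ on $B_R\cap\Sigma$). The identities $h-h'v^2=-kh^2$ and $4h''v^2+6\bigl(h'-(h')^2v^2/h\bigr)=\tfrac{2k}{(1-kv^2)^2}h$ then produce the differential inequality
\begin{equation*}
\Delta f\;\geq\;\Bigl\langle \tfrac{\nabla h}{h}+\tfrac{x}{2},\nabla f\Bigr\rangle - f + 2kf^2 + \frac{2k|\nabla v|^2}{(1-kv^2)^2}\,f .
\end{equation*}
The good term $+2kf^2$ is exactly what your inequality lacks: at the maximum $y_0$ of $\phi f$ (with $\phi=((\rho^2-|x-x_0|^2)_+)^3$) it yields $2k\phi f(y_0)\leq C(\rho^6+R\rho^5+R^2\rho^4)$ and hence $f\leq C(1+R/\rho)^2$, which is the theorem; the auxiliary good gradient term absorbs the $\nabla\log H$ contributions you worried about (using also $H\leq|x|/2$ from the shrinker equation). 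Both good terms vanish identically for your choice $h(y)=y$, which is why your argument stalls exactly where your write-up becomes uncertain.
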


The proof of Theorem \ref{thm:curv2} is inspired by the interior curvature estimates of Ecker-Huisken  \cite{EH2}, which give local curvature estimates for MCF with bounded gradient. Similar arguments may also be found in \cite{AG} and \cite{CNS}.

One central problem in the study of the singularities is the uniqueness of blowups, namely, whether different sequences of dilations might give different blowups. For compact singularities of MCF, this uniqueness problem is better understood; see for instance \cite{Sc1} and \cite{Se1}. The first uniqueness theorem for blowups at noncompact singularities was obtained by  Colding-Ilmanen-Minicozzi \cite{CM5}, who proved that if one blowup at a singularity of MCF is a multiplicity-one cylinder, then every subsequential limit is also a cylinder, and Colding-Minicozzi \cite{CM16}, who showed that the axis of the cylinder is also independent of the sequence of rescalings. Using this uniqueness in a fundamental way, Colding-Minicozzi \cite{CM15} were able to give a quite complete description of the singular set for MCF having only generic singularities. 

The key to proving the uniqueness at cylindrical singularities is the rigidity theorem of \cite[Theorem 0.1]{CM5}, which says that any self-shrinker that is mean convex with bounded  $|A|$ on a large compact set must in fact be a cylinder. As an application of Theorem \ref{thm:curv2}, we show that their rigidity theorem holds even without the assumption on $|A|$, so long as $n\leq 6$. Specifically, we prove the following:

\begin{theorem}\label{thm:H>0}
Given $n \leq 6$ and  $\lambda_0$, there exists $R=R(n,\lambda_0)$ so that if $\Sigma^n\subset \mf{R}^{n+1}$ is a self-shrinker with entropy $\lambda (\Sigma) \leq \lambda_0$  which satisfies
\begin{itemize}
\item[($\dagger$)]  $ H \geq 0 $ on $ B_R \cap \Sigma$, 
\end{itemize}
then $\Sigma$ is a generalized cylinder $\mf{S}^k \times \mf{R}^{n-k}$ for some $0\leq k \leq n$.
\end{theorem}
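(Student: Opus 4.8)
The plan is to argue by contradiction and compactness, using Theorem~\ref{thm:curv2} together with the classification of Colding--Minicozzi \cite{CM1} and the rigidity theorem of \cite{CM5}. Suppose the statement fails. Then there are $R_i\to\infty$ and self-shrinkers $\Sigma_i^n\subset\mf{R}^{n+1}$ with $\lambda(\Sigma_i)\leq\lambda_0$ and $H\geq 0$ on $B_{R_i}\cap\Sigma_i$, none of which is a generalized cylinder; passing to a component, we may assume each $\Sigma_i$ is connected, since entropy, mean convexity and properness are inherited by components and at least one component is not a cylinder. I first dispose of the case in which $H$ vanishes somewhere on $B_{R_i}\cap\Sigma_i$: on a self-shrinker $H$ solves the drift-elliptic equation $\mathcal{L}H=(\tfrac12-|A|^2)H$ with $\mathcal{L}=\Delta_\Sigma-\tfrac12\langle x,\nabla_\Sigma\,\cdot\,\rangle$, so if $H\geq 0$ on $B_{R_i}\cap\Sigma_i$ and $H(p)=0$ for some interior $p$, the strong maximum principle forces $H\equiv 0$ on a component of $B_{R_i}\cap\Sigma_i$, and then unique continuation (self-shrinkers are real-analytic) gives $H\equiv 0$ on $\Sigma_i$; but $H\equiv 0$ means $\langle x,\mf{n}\rangle\equiv 0$, so $\Sigma_i$ is a connected, smooth, properly embedded minimal cone, hence a hyperplane through the origin, a generalized cylinder --- contradiction. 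Thus $H>0$ on $B_{R_i}\cap\Sigma_i$ for all $i$.

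Next I pass to a limit. The bound $\lambda(\Sigma_i)\leq\lambda_0$ gives uniform Gaussian area ratios and polynomial volume growth, so a subsequence of the $\Sigma_i$ converges, in the sense of varifolds, to a self-shrinker $\Sigma_\infty$ with $\lambda(\Sigma_\infty)\leq\lambda_0$. This is where $n\leq 6$ enters: by the compactness and regularity theory for self-shrinkers of bounded entropy --- any singular point of such a limit would persist under blow-up as a singular minimal cone of codimension at most $7$, which is excluded when $n\leq 6$ --- the limit $\Sigma_\infty$ is smooth and the convergence is smooth on compact sets, with some integer multiplicity. (Theorem~\ref{thm:curv2} enters here as well, as it bounds $|A|$ on the $\Sigma_i$ in terms of $H$ wherever $H$ is bounded below, so any curvature concentration of the limit must occur on the set $\{\,H=0\,\}$.) Since $H\geq 0$ on $B_{R_i}\cap\Sigma_i$ and $R_i\to\infty$, the smooth limit satisfies $H\geq 0$ everywhere; being a smooth complete embedded self-shrinker with $H\geq 0$ and polynomial volume growth, $\Sigma_\infty$ is a generalized cylinder $\mf{S}^k\times\mf{R}^{n-k}$ by \cite{CM1}.

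Finally I feed this back into \cite{CM5}. Since $|A|$ on $\mf{S}^k\times\mf{R}^{n-k}$ equals $0$ or $\tfrac1{\sqrt 2}$, a bound independent of $k$, I may take $\rho=\rho(n,\lambda_0)$ to be the radius in the rigidity theorem of \cite{CM5} associated with the curvature bound $1$. For $i$ large we have $R_i>\rho$ and $\Sigma_i$ is $C^2$-close to $\mf{S}^k\times\mf{R}^{n-k}$ on $B_\rho$, so $|A|\leq 1$ on $B_\rho\cap\Sigma_i$, while $H>0$ there by the first step when $k=0$ and by closeness to the cylinder when $k\geq 1$. Hence $\Sigma_i$ is mean convex with bounded curvature on $B_\rho\cap\Sigma_i$, so \cite{CM5} forces $\Sigma_i$ to be a generalized cylinder --- a contradiction. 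The main obstacle is the compactness step: because the constant in Theorem~\ref{thm:curv2} degenerates as the lower bound for $H$ tends to zero, one cannot extract uniform local curvature bounds for the $\Sigma_i$ directly, and the smoothness of the limit (and the control of its multiplicity) must come from the low-dimensional regularity theory --- which is precisely why the hypothesis $n\leq 6$ is needed.
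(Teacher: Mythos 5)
Your overall architecture (rule out $H\equiv 0$, extract a limit, identify it as a cylinder via \cite{CM1}, then feed the resulting closeness back into the rigidity theorem of \cite{CM5}) matches the paper's strategy, and your first and last steps are essentially sound — the paper itself notes in Remark \ref{rmk:direct} that once smooth closeness to a cylinder is established one may appeal directly to Theorem 0.1 of \cite{CM5}. The genuine gap is in the compactness step. You pass to a varifold limit and assert that "the compactness and regularity theory for self-shrinkers of bounded entropy" makes the limit smooth because a blow-up at a singular point would be "a singular minimal cone of codimension at most $7$, which is excluded when $n\leq 6$." No such unconditional regularity theory exists: bounded entropy alone does not rule out singular varifold limits, and the Simons-type exclusion of low-dimensional singular minimal cones applies only to \emph{stable} (or area-minimizing) cones — non-flat singular minimal cones such as the cone over the Clifford torus exist already in $\mf{R}^4$. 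You have not established any stability of the putative blow-up, so the dimension restriction does no work in your argument as written. Your side remark that Theorem \ref{thm:curv2} confines curvature concentration to $\{H=0\}$ does not close this, since $H$ may be uniformly small on large sets of $\Sigma_i$.

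What actually makes the limit smooth — and where $n\leq 6$ genuinely enters — is Lemma \ref{lemma:v:u:positive_mean} (from \cite{GZ4}): because $LH=H$ and $H>0$ on $B_R\cap\Sigma_i$, each $\Sigma_i$ satisfies an almost-stability inequality, and the Schoen--Simon--Yau-type argument yields the \emph{a priori} pointwise bound $|A|\leq C(1+|x|)$ on $B_{R-1}\cap\Sigma_i$ with $C=C(n,\lambda_0)$, valid for $n\leq 6$. With this uniform curvature bound in hand, the compactness Lemma \ref{lemma:v:u:cmpt} gives smooth, multiplicity-one convergence (multiplicity one being ruled out via $L$-stability of higher-multiplicity limits, another point you leave unaddressed when you allow "some integer multiplicity" and then use $C^2$-closeness of $\Sigma_i$ to the cylinder). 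To repair your proof, replace the unsupported varifold regularity claim with an invocation of Lemma \ref{lemma:v:u:positive_mean} before taking the limit; the rest of your argument then goes through.
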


It is important to emphasize that we do not assume any bound for the curvature $|A|$ in the above theorem. In this way Theorem \ref{thm:H>0} is analogous to Colding-Minicozzi's removal of the curvature assumption in Huisken's classification of mean convex shrinkers, and gives a quantitative and stronger version of their result, for $n\leq 6$. This restriction on dimension comes from the curvature estimate for shrinkers with positive mean curvature. Namely, any shrinker in $\mf{R}^{n+1}$, $n\leq 6$, with $H>0$ must satisfy $|A|(x) \leq C(1+ |x|)$ for some $C$ depending only on the volume growth and $n$; see Lemma \ref{lemma:v:u:positive_mean} or Section 3 in \cite{GZ4}.  

If we assume a lower bound of the mean curvature, we obtain the following rigidity theorem that holds in all dimensions.

\begin{theorem} \label{thm:main}
Given $n$, $\lambda_0$ and $\delta>0$, there exists $R=R(n,\lambda_0, \delta)$ so that if $\Sigma^n\subset \mf{R}^{n+1}$ is a self-shrinker with entropy $\lambda (\Sigma) \leq \lambda_0$  which satisfies
\begin{itemize}
\item[($\ddagger$)]  $ H \geq \delta $ on $ B_R \cap \Sigma$, 
\end{itemize}
then $\Sigma$ is a generalized cylinder $\mf{S}^k \times \mf{R}^{n-k}$ for some $1\leq k \leq n$.
\end{theorem}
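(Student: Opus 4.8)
\emph{Proof proposal.} The plan is to argue by contradiction, using Theorem~\ref{thm:curv2} and the shrinker identity $H=\tfrac12\langle x,\mf{n}\rangle$ to obtain curvature bounds on every fixed ball, passing to a limiting self-shrinker, identifying that limit as a generalized cylinder by the Colding--Minicozzi classification of mean convex shrinkers \cite{CM1}, and then contradicting the rigidity theorem \cite[Theorem 0.1]{CM5} applied to the approximating shrinkers. So suppose the conclusion fails for some fixed $n$, $\lambda_0$, $\delta$: there are self-shrinkers $\Sigma_i^n\subset\mf{R}^{n+1}$ (properly embedded, as they have finite entropy), none of which is a generalized cylinder, with $\lambda(\Sigma_i)\le\lambda_0$ and $H\ge\delta$ on $B_{R_i}\cap\Sigma_i$ for some $R_i\to\infty$. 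Fix $\rho>0$. For all $i$ large enough that $R_i>\rho+1$, Theorem~\ref{thm:curv2} together with $H(x)=\tfrac12\langle x,\mf{n}\rangle\le\tfrac12|x|$ gives, for $x\in B_\rho\cap\Sigma_i$,
\[
|A|(x)\ \le\ \frac{CR_i}{R_i-|x|}\,H(x)\ \le\ \frac{CR_i}{R_i-\rho}\cdot\frac{\rho}{2},
\]
where $C=C(n,\delta)$. Since $R_i/(R_i-\rho)\to1$, this yields $\sup_{B_\rho\cap\Sigma_i}|A|\le C\rho$ for all large $i$; that is, the $\Sigma_i$ have curvature bounded uniformly on every fixed ball.

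The entropy bound $\lambda(\Sigma_i)\le\lambda_0$ gives uniform local area bounds $\vol(B_\rho\cap\Sigma_i)\le C(n,\lambda_0)\rho^n$ for $\rho\ge1$, so by standard compactness arguments (Arzel\`a--Ascoli and elliptic estimates for the shrinker equation, using the uniform local curvature bounds) a subsequence converges in $C^\infty_{\mathrm{loc}}$, possibly with multiplicity, to a smooth properly embedded self-shrinker $\Sigma_\infty$. This limit is nonempty: by the monotonicity formula every self-shrinker with entropy at most $\lambda_0$ meets a fixed ball $\bar B_{r_0}$, $r_0=r_0(n,\lambda_0)$, so the $\Sigma_i$ cannot escape to infinity. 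Passing to the limit, $\lambda(\Sigma_\infty)\le\lambda_0$; and since $H\ge\delta$ on $B_\rho\cap\Sigma_i$ for every fixed $\rho$ and all large $i$, we get $H\ge\delta>0$ on all of $\Sigma_\infty$. Thus $\Sigma_\infty$ is a properly embedded self-shrinker of polynomial volume growth with $H>0$, so by \cite{CM1} it is a generalized cylinder $\mf{S}^k\times\mf{R}^{n-k}$; since $H\equiv\sqrt{k/2}$ on such a cylinder and $H>0$, we must have $1\le k\le n$, and in particular $|A|_{\Sigma_\infty}\equiv1/\sqrt2$ is bounded by an absolute constant.

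This last point is exactly what the argument needs. Let $R_0=R_0(n,\lambda_0)$ be the radius furnished by \cite[Theorem 0.1]{CM5} for the fixed curvature bound $1$, so that any self-shrinker with entropy at most $\lambda_0$, with $H\ge0$ on $B_{R_0}$ and $|A|\le1$ on $B_{R_0}$, is a generalized cylinder. Because $\Sigma_i\to\Sigma_\infty$ in $C^\infty$ on $B_{R_0+1}$ and $|A|_{\Sigma_\infty}\equiv1/\sqrt2<1$, for all large $i$ we have $|A|\le1$ on $B_{R_0}\cap\Sigma_i$; for such $i$ we also have $R_i>R_0$, hence $H\ge\delta\ge0$ on $B_{R_0}\cap\Sigma_i$, while $\lambda(\Sigma_i)\le\lambda_0$. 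Therefore \cite[Theorem 0.1]{CM5} applies to each such $\Sigma_i$ and shows it is a generalized cylinder, contradicting the choice of the $\Sigma_i$. This proves Theorem~\ref{thm:main}, the restriction $1\le k\le n$ following exactly as for $\Sigma_\infty$.

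I expect the main obstacle to be the circular dependence that blocks a one-step proof: Theorem~\ref{thm:curv2} bounds $|A|$ on $B_\rho$ only by a constant of order $\rho$, which degenerates as $\rho\to\infty$, whereas \cite[Theorem 0.1]{CM5} requires a \emph{fixed} curvature bound on a ball whose radius is allowed to grow with that bound. Passing to the limit is what breaks this: it converts the $\rho$-dependent estimate into the scale-free conclusion that $\Sigma_\infty$ is a cylinder, after which a fixed curvature bound on a fixed ball becomes available for the $\Sigma_i$ themselves. The remaining points --- nonemptiness of the limit, and the fact that the multiplicity of convergence is harmless because \cite[Theorem 0.1]{CM5} is applied to each embedded $\Sigma_i$ rather than to $\Sigma_\infty$ --- are routine.
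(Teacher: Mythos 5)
Your proposal is correct, and it is essentially the alternative ``direct'' compactness argument that the authors themselves acknowledge in Remark \ref{rmk:direct} but deliberately do not take. The two proofs share their first half: Theorem \ref{thm:curv2} together with $H=\tfrac12\lb{x,\mf{n}}\le\tfrac12|x|$ gives linear curvature bounds on large balls, and the compactness Lemma \ref{lemma:v:u:cmpt} plus the classification of mean convex shrinkers (Theorem \ref{thm:huisken}) identifies the limit as a cylinder $\mf{S}^k\times\mf{R}^{n-k}$ with $k\ge1$, so that $\Sigma$ is smoothly close to a cylinder on a ball of controlled size. The divergence is in the last step: you use the smooth closeness to recover a \emph{fixed} bound $|A|\le1$ on a fixed large ball and then invoke \cite[Theorem 0.1]{CM5} as a black box, whereas the paper retains only the conclusion $H\ge\delta_0$ on $B_{R/2}\cap\Sigma$ and feeds it into Proposition \ref{prop:v:u:entropy2}, whose proof re-runs the Colding--Ilmanen--Minicozzi iteration/improvement scheme with the improvement step (Proposition \ref{prop:v:u:improve}) reproved under a mean-convexity hypothesis alone, via the derivative estimates on $\tau=A/H$ of Lemma \ref{cor:v:u:taubound_1}. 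Your route is shorter; the paper's buys a more self-contained, quantitative argument and the cylindrical estimates of Lemma \ref{cor:v:u:taubound_1}, which the authors regard as of independent interest. Two small remarks on your write-up: the multiplicity of the limit is in fact one (Lemma \ref{lemma:v:u:cmpt} excludes higher multiplicity via $L$-stability), although, as you observe, this is immaterial since \cite[Theorem 0.1]{CM5} is applied to the embedded $\Sigma_i$ themselves; and where the paper needs $\lambda_0<2$ for its propositions, it obtains this from the convergence of entropies in Lemma \ref{lemma:v:u:cmpt} once the limit is known to be a cylinder --- your direct appeal to \cite[Theorem 0.1]{CM5} with the given $\lambda_0$ sidesteps that reduction.
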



Finally, we give a rigidity theorem for graphical shrinkers in all dimensions. In \cite{GZ4}, the authors showed that for $n \leq 6$, any shrinker in $\mf{R}^{n+1}$ which is graphical (in the sense that for a constant unit vector $V$, the normal part $\lb{V,\mf{n}}>0$) inside a large, but compact, set must be a hyperplane. Again the restriction on dimension came from the curvature estimate for graphical shrinkers (see Theorem 0.4 in \cite{GZ4}). Here, if we assume that $\lb{V, \mf{n}}$ has a positive lower bound, then in all dimensions we obtain the following theorem as a direct consequence of the curvature estimate of Ecker-Huisken \cite[Theorem 3.1]{EH2} (see also Theorem \ref{thm:curv_graph}) and some ingredients from \cite{GZ4}.

\begin{theorem}\label{thm:graph} Given $n$, $\lambda_0$ and $\delta > 0$, there
  exists $R=R(n,\lambda_0 , \delta)$ so that if $\Sigma^n \subset
  \mf{R}^{n+1}$ is a self-shrinker with entropy $\lambda (\Sigma) \leq
  \lambda_0$ satisfying
  \begin{itemize}
   \item $w=\lb{V,\mf{n}}\geq \delta$ on $B_R \cap \Sigma$ for some constant unit vector $V$,
  \end{itemize}
  then $\Sigma$ is a hyperplane.
\end{theorem}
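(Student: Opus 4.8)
The plan is to combine three ingredients in sequence: the Ecker–Huisken type interior curvature estimate for graphical shrinkers, a compactness argument for shrinkers with bounded entropy, and the classification of shrinkers that are entire graphs. First I would observe that the hypothesis $w = \langle V, \mathbf{n}\rangle \geq \delta$ on $B_R \cap \Sigma$ means that, near the origin, $\Sigma$ is locally a graph over the hyperplane $V^\perp$ with gradient bounded in terms of $1/\delta$. Applying the interior curvature estimate of Ecker–Huisken (cited as \cite[Theorem 3.1]{EH2}, or Theorem \ref{thm:curv_graph}) to the associated mean curvature flow through $\Sigma$ — recall that a shrinker is the $t=-1$ slice of a self-similarly shrinking flow — yields a bound $|A|(x) \leq C(n,\delta)$ on, say, $B_{R/2} \cap \Sigma$, with $C$ independent of $R$. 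This is the step that genuinely uses the positive lower bound $\delta$ rather than just $w>0$: it converts the gradient bound into a curvature bound on a fixed fraction of the ball, with constants uniform in $R$.

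Next I would run a contradiction/compactness argument. Suppose the theorem fails; then there is a sequence $R_i \to \infty$ and shrinkers $\Sigma_i$ with $\lambda(\Sigma_i) \leq \lambda_0$, satisfying $w_i \geq \delta$ on $B_{R_i}\cap \Sigma_i$, but with no $\Sigma_i$ a hyperplane. By the curvature estimate from the first step, $|A_i| \leq C(n,\delta)$ on $B_{R_i/2}\cap \Sigma_i$, and the entropy bound gives uniform polynomial area growth (via Brakke/Colding–Minicozzi, as is standard for shrinkers with bounded entropy). Then the usual Arzelà–Ascoli / Allard-type compactness for shrinkers with uniformly bounded curvature and area produces a smooth limit $\Sigma_\infty$, which is a properly embedded self-shrinker in all of $\mf{R}^{n+1}$ satisfying $\langle V, \mathbf{n}_\infty\rangle \geq \delta > 0$ everywhere. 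In particular $\Sigma_\infty$ is an entire graph over $V^\perp$ with bounded gradient.

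The key classification input — which is where I would borrow from \cite{GZ4} — is that the only entire graphical self-shrinker is the hyperplane. This follows because an entire graphical shrinker has a well-defined globally defined function $w = \langle V, \mathbf{n}\rangle \geq \delta > 0$ which, by the shrinker equation, satisfies a nice drift-elliptic (Ornstein–Uhlenbeck type) equation $\mathcal{L}w = \tfrac12 w$ where $\mathcal{L} = \Delta - \tfrac12\langle x, \nabla \cdot\rangle + |A|^2$; a maximum-principle / Simons-type argument, or directly the uniqueness of the hyperplane as an entire shrinker graph (Wang, Ecker–Huisken), forces $A \equiv 0$. So $\Sigma_\infty$ is a hyperplane through the origin. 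Finally, I would upgrade this to a contradiction with the $\Sigma_i$ not being hyperplanes: since $\Sigma_\infty$ is a hyperplane, for $i$ large the $\Sigma_i$ lie in a small $C^2$-neighborhood of a hyperplane on a large ball, and then a rigidity/unique-continuation statement for shrinkers near the hyperplane (again from \cite{GZ4}, using that the hyperplane is "rigid" among shrinkers that are graphical and close to it, e.g. via the strong maximum principle applied to $w$ or to the height function) forces $\Sigma_i$ itself to be a hyperplane, contradicting the choice of the sequence.

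The main obstacle is the last step: passing from "close to a hyperplane on a large ball" back to "is exactly a hyperplane" for the individual $\Sigma_i$. Mere $C^2$-closeness on a ball of radius $R_i$ does not immediately give equality — one needs a genuine rigidity statement. I expect this to be handled as in \cite{GZ4}: either by a Brakke-type argument showing that a shrinker with entropy close to $1$ and graphical on a large ball must be flat, or by exploiting that along the contradiction sequence one can normalize so the limit is the hyperplane $V^\perp$, and then a Liouville-type theorem for the linearized operator (the first eigenvalue of $\mathcal{L}$ on the hyperplane being positive on the relevant function space) prevents any nontrivial nearby shrinker. Making precise the compactness with proper embeddedness and no loss of multiplicity — which is why the entropy bound $\lambda_0$ is needed — is the other technical point, but it is routine given the bounded curvature on balls of radius $R_i/2 \to \infty$.
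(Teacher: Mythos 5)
Your proposal is correct and follows essentially the same route as the paper: the Ecker--Huisken type curvature estimate (Theorem \ref{thm:curv_graph}), compactness under the entropy bound (Lemma \ref{lemma:v:u:cmpt}, which also gives $\lambda(\Sigma_i)\to\lambda(\Sigma_\infty)=1$), identification of the limit as a hyperplane, and then Brakke's theorem to force the $\Sigma_i$ themselves to be flat --- which is precisely the first of the two options you offer for closing the final ``close to a hyperplane does not mean equal to a hyperplane'' gap. The only cosmetic difference is that the paper classifies the situation by first using Theorem 2.2 of \cite{GZ4} to make $|A|^2\leq 1/4$ on $B_{R/2}\cap\Sigma$ and then invoking the fact from \cite{CL} that a complete shrinker with $|A|^2<1/2$ is a hyperplane, rather than the entire-graphical-shrinker classification you cite for the limit.
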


Let us now briefly outline the structure of this paper. In Section \ref{sec:notation}, we review some key definitions and notation. In Section \ref{sec:curv}, we prove our main curvature estimate Theorem \ref{thm:curv2}. We discuss the proofs of Theorems \ref{thm:H>0} and \ref{thm:main} in Section \ref{sec:rigidity}, by adapting the iteration and improvement scheme of Colding-Ilmanen-Minicozzi \cite{CM5}. We are also able to give shorter proofs using a compactness argument (see Remark \ref{rmk:direct}), but we believe that the improvement method provides a more effective argument. In particular, the cylindrical estimates of Lemma \ref{cor:v:u:taubound_1} may be of independent interest. Finally, in Section \ref{sec:bernstein} we provide the proof of Theorem \ref{thm:graph} as well as some curvature estimates and a Bernstein-type theorem for translators of the mean curvature flow.

\subsection*{Acknowledgements}
The authors would like to thank Professor William Minicozzi for his ever helpful advice and encouragement. The second author is supported in part by the National Science Foundation under grant DMS-1308244. 

\section{Notation and Background}
\label{sec:notation}

\subsection{Notation}
Let $\Sigma^n \subset \mathbf{R}^{n+1}$ be a smooth hypersurface, $\Delta$ its Laplace operator, $A$ its second fundamental form and $H$=div$_{\Sigma}\mathbf{n}$ its mean curvature. We denote by $B_R(x)$ the (closed) ball in $\mathbf{R}^{n+1}$ of radius $R$ centered at $x$. For convenience we will introduce the shorter notation $B_R = B_R(0)$. 

We begin by recalling the following classification of smooth, embedded mean convex self-shrinkers from \cite{CM1}.

 \begin{theorem}(\cite{CM1}) \label{thm:huisken} $\mf{S}^k\times \mf{R}^{n-k}$
    are the only smooth complete embedded self-shrinkers without boundary,
    with polynomial volume growth, and $H \geq 0$ in $\mf{R}^{n+1}$.
  \end{theorem}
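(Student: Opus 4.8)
The plan is to follow the scheme of Colding--Minicozzi \cite{CM1}, whose main point is to remove the pointwise curvature bound present in Huisken's classification. Throughout, write $\mathcal{L}_0 = \Delta - \tfrac12\lb{x,\nabla(\cdot)}$ for the drift Laplacian on $\Sigma$, which is self-adjoint with respect to the Gaussian measure $\ee^{-|x|^2/4}\,d\mu$, and recall the Simons-type identities for self-shrinkers
\[
\mathcal{L}_0 H = \Big(\tfrac12 - |A|^2\Big)H , \qquad \mathcal{L}_0 A = \Big(\tfrac12 - |A|^2\Big)A ,
\]
the first being equivalent to $\mathcal{L}H = H$ for $\mathcal{L} = \mathcal{L}_0 + |A|^2 + \tfrac12$. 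First I would establish a dichotomy: since $|A|^2$ is locally bounded and $H\ge 0$, the strong maximum principle applied to the equation for $H$ forces either $H\equiv 0$ or $H>0$ on each component of $\Sigma$. If $H\equiv 0$, the shrinker equation gives $\lb{x,\mf{n}}\equiv 0$, so $\Sigma$ is dilation-invariant, hence a cone, hence --- being smooth and properly embedded --- a hyperplane through the origin, which is the case $k=0$.

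It then remains to show that when $H>0$ everywhere, $\Sigma$ must be a round cylinder; note that $A$ is then nowhere vanishing, since $A(p)=0$ would force $H(p)=0$. The key device is the drift operator $\mathcal{L}_H = \mathcal{L}_0 + 2H^{-1}\lb{\nabla H,\nabla(\cdot)}$, which is self-adjoint with respect to the weighted measure $d\nu := H^2\ee^{-|x|^2/4}\,d\mu$. A direct computation from the identities above shows that, for $w = |A|^2/H^2$,
\[
\mathcal{L}_H\, w = \frac{2}{H^2}\Big|\nabla A - H^{-1}\,\nabla H\otimes A\Big|^2 \ge 0,
\]
and moreover $|\nabla w|^2 \le 2w\,\mathcal{L}_H w$, with equality in both precisely when the normalized second fundamental form $A/H$ is parallel.

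The main obstacle, I expect, is the weighted curvature estimate that polynomial volume growth implies
\[
\int_\Sigma |A|^2\,\ee^{-|x|^2/4}\,d\mu < \infty
\]
(equivalently, $\int_\Sigma w\,d\nu < \infty$). The subtlety is that no pointwise bound on $|A|$ is available, so one cannot simply quote Huisken; instead this is obtained by integrating the Simons inequality $\mathcal{L}_0|A|^2 \ge (1-2|A|^2)|A|^2 + 2|\nabla A|^2$ against a carefully chosen cutoff and absorbing the indefinite $-2|A|^4$ term using the decay of the Gaussian weight together with the shrinker structure --- this is one of the central technical lemmas of \cite{CM1}. Granting it, I would multiply the displayed identity for $\mathcal{L}_H w$ by $\eta^2$, integrate against $d\nu$, and use self-adjointness to write $\int\eta^2\,\mathcal{L}_H w\,d\nu = -2\int\eta\,\lb{\nabla\eta,\nabla w}\,d\nu$; then Cauchy--Schwarz together with $|\nabla w|^2\le 2w\,\mathcal{L}_H w$ yields $\int\eta^2\,\frac{|\nabla w|^2}{w}\,d\nu \le C\int|\nabla\eta|^2\,|A|^2\,\ee^{-|x|^2/4}\,d\mu$. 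Taking $\eta\equiv 1$ on $B_R\cap\Sigma$, supported in $B_{2R}$, with $|\nabla\eta|\le C/R$, the right-hand side tends to $0$ as $R\to\infty$ by the weighted estimate, so $\nabla w\equiv 0$ and $|A|^2/H^2$ is constant.

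Finally, constancy of $|A|^2/H^2$ forces equality in the identity for $\mathcal{L}_H w$, i.e.\ $\nabla(A/H)=0$, so $A/H$ is a parallel, symmetric, trace-one $2$-tensor. Combined with the Codazzi equation and the shrinker equation --- equivalently, running Huisken's rigidity argument \cite{Hui90, Hui93} now that $|A|^2/H^2$ is known to be constant --- this forces $\Sigma$ to split isometrically as a round cylinder $\mf{S}^k\times\mf{R}^{n-k}$ of the stated radius, which completes the proof. (When $n\le 6$ one could instead bypass the weighted estimate altogether: polynomial volume growth makes the entropy $\lambda(\Sigma)$ finite, so Theorem~\ref{thm:H>0}, applied with $R = R(n,\lambda(\Sigma))$, gives the conclusion directly.)
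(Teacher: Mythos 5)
This theorem is quoted from \cite{CM1} without proof, so there is no internal argument to compare against; your sketch is, in outline, a faithful reconstruction of the Colding--Minicozzi strategy (the dichotomy $H\equiv 0$ or $H>0$; a weighted $L^2$ bound on $|A|$; the drift operator self-adjoint with respect to $H^2\ee^{-|x|^2/4}\,d\mu$ applied to $\tau=A/H$, for which your identities $\mathcal{L}_H|\tau|^2=2|\nabla\tau|^2$ and the Cauchy--Schwarz step are correct; and the pointwise splitting once $\nabla\tau\equiv 0$). Two caveats. First, the one genuinely hard ingredient, $\int_\Sigma|A|^2\ee^{-|x|^2/4}<\infty$ in the absence of any pointwise curvature bound, is not obtained by ``absorbing the $-2|A|^4$ term'' in Simons' inequality --- that term has the unfavorable sign and cannot be absorbed directly. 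In \cite{CM1} it comes from the stability-type inequality generated by the positive solution $H$ of $LH=H$: since $\mathcal{L}\log H=\tfrac12-|A|^2-|\nabla\log H|^2$, integrating against $\phi^2\ee^{-|x|^2/4}$ and using Cauchy--Schwarz yields $\int\phi^2|A|^2\ee^{-|x|^2/4}\leq\int(|\nabla\phi|^2+\tfrac12\phi^2)\ee^{-|x|^2/4}$, and polynomial volume growth bounds the right-hand side uniformly. This is precisely the ``almost-stability'' mechanism that the present paper invokes in connection with Lemma \ref{lemma:v:u:positive_mean}. Second, your closing parenthetical is circular: Theorem \ref{thm:H>0} is proved via Proposition \ref{prop:v:u:entropy2}, whose proof concludes by appealing to Theorem \ref{thm:huisken} (the classification of mean convex shrinkers), so it cannot be used to establish Theorem \ref{thm:huisken} even for $n\leq 6$.
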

We will also consider the operators $\mathcal{L}$ and $L$ from \cite{CM1} defined by   
  
  \begin{equation}
  \mathcal{L}=\Delta-\frac{1}{2}\lb{x, \nabla \cdot},
  \end{equation}
  \begin{equation}
L=\Delta-\frac{1}{2}\lb{x, \nabla \cdot} +|A|^2 + \frac{1}{2}.
\end{equation}

The next lemma records three useful identities from \cite{CM1}.
\begin{lemma}(\cite{CM1})\label{lemma:simons}
If $\Sigma^n \subset \mf{R}^{n+1}$ is a smooth \sk, then for any constant vector $V \in\mathbf{R}^{n+1}$ we have
\begin{equation}
 L H= H,
\end{equation}
\begin{equation}   
L \lb{V,\mf{n}}=\frac{1}{2} \lb{V, \mf{n}}
\end{equation}
and
\begin{equation}
\mathcal{L} |A|^2= |A|^2 - 2|A|^4 +2 |\nabla A|^2.
\end{equation}
\end{lemma}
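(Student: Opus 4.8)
All three identities are pointwise and will follow from the shrinker equation \eqref{eq:shrinker} together with the structure equations of a hypersurface in $\mf{R}^{n+1}$; no global hypotheses on $\Sigma$ are needed. The plan is to first record the relevant first and second derivatives of the geometric quantities $x$, $\mf{n}$ and $H$, then feed them into Simons' identity, and observe that in each case the extraneous first-order term produced by differentiating through \eqref{eq:shrinker} is cancelled exactly by the drift term of $\mathcal{L}$.

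I would start from the observation that for a function $u$ on $\Sigma$ one has $\mathcal{L}u = \Delta u - \tfrac12\lb{x^T,\nabla u}$, where $x^T$ denotes the tangential part of the position vector (since $\nabla u$ is tangential, only $x^T$ is seen). Writing $A_{ij}=\lb{\nabla_{e_i}\mf{n},e_j}$, so that $H=\mathrm{tr}\,A$, the Gauss--Weingarten and Codazzi equations give $\Delta x=-H\mf{n}$, $\nabla^\Sigma_{e_i}x^T=e_i-\lb{x,\mf{n}}A(e_i,\cdot)$, and $\Delta\mf{n}=\nabla^\Sigma H-|A|^2\mf{n}$. Differentiating \eqref{eq:shrinker} once, using $\nabla_{e_i}x=e_i$ and $\lb{e_i,\mf{n}}=0$, then gives $\nabla H=\tfrac12 A(x^T,\cdot)$, and the same computation applied to $\lb{V,\mf{n}}$ via Weingarten gives $\nabla\lb{V,\mf{n}}=A(V^T,\cdot)$ for any constant $V$.

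With these at hand the first two identities are short. For $\lb{V,\mf{n}}$: from the formula for $\Delta\mf{n}$ we get $\Delta\lb{V,\mf{n}}=\lb{V^T,\nabla H}-|A|^2\lb{V,\mf{n}}=\tfrac12 A(V^T,x^T)-|A|^2\lb{V,\mf{n}}$, while the drift term is $-\tfrac12\lb{x^T,A(V^T,\cdot)}=-\tfrac12 A(V^T,x^T)$; by symmetry of $A$ these cancel, so $\mathcal{L}\lb{V,\mf{n}}=-|A|^2\lb{V,\mf{n}}$ and hence $L\lb{V,\mf{n}}=\tfrac12\lb{V,\mf{n}}$. For $H$: expanding $\Delta H=\tfrac12\Delta\lb{x,\mf{n}}$ by the product rule produces $\lb{\Delta x,\mf{n}}=-H$, $2\lb{\nabla_{e_i}x,\nabla_{e_i}\mf{n}}=2H$, and $\lb{x,\Delta\mf{n}}=\lb{x^T,\nabla H}-2H|A|^2$, so $\Delta H=\tfrac12 H+\tfrac12\lb{x^T,\nabla H}-|A|^2H$; subtracting the drift term leaves $\mathcal{L}H=\tfrac12 H-|A|^2H$, i.e. $LH=H$.

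For $\mathcal{L}|A|^2$ I would invoke Simons' identity for hypersurfaces in $\mf{R}^{n+1}$, $\Delta A_{ij}=\nabla_i\nabla_j H+HA_{ik}A_{kj}-|A|^2A_{ij}$; contracting with $A_{ij}$ and using $\tfrac12\Delta|A|^2=\lb{\Delta A,A}+|\nabla A|^2$ gives $\tfrac12\Delta|A|^2=(\nabla_i\nabla_j H)A_{ij}+H\,\mathrm{tr}(A^3)-|A|^4+|\nabla A|^2$. Differentiating $\nabla_j H=\tfrac12 A_{jk}\lb{x,e_k}$ once more, substituting $\nabla^\Sigma_{e_i}x^T=e_i-\lb{x,\mf{n}}A(e_i,\cdot)$, and using the full symmetry of $\nabla A$ (Codazzi) to write $A_{ij}\nabla_i A_{jk}=A_{ij}\nabla_k A_{ij}=\tfrac12\nabla_k|A|^2$, one obtains $(\nabla_i\nabla_j H)A_{ij}=\tfrac14\lb{x^T,\nabla|A|^2}+\tfrac12|A|^2-H\,\mathrm{tr}(A^3)$. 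The $H\,\mathrm{tr}(A^3)$ terms cancel, giving $\Delta|A|^2=\tfrac12\lb{x^T,\nabla|A|^2}+|A|^2-2|A|^4+2|\nabla A|^2$, and subtracting the drift term yields $\mathcal{L}|A|^2=|A|^2-2|A|^4+2|\nabla A|^2$. There is no conceptual obstacle in the argument; the only points requiring care are fixing the sign conventions for $A$, $H$, $\mf{n}$ consistently, quoting Simons' identity in the right normalization, and the Codazzi symmetrization $A_{ij}\nabla_i A_{jk}=\tfrac12\nabla_k|A|^2$, which is precisely what converts the stray contraction of $\nabla^2 H$ against $A$ into a gradient of $|A|^2$ that the drift term can absorb.
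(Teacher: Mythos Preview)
Your proof is correct. The paper does not supply its own proof of this lemma but simply quotes it from \cite{CM1}; the argument you give --- computing via the Gauss--Weingarten relations and Simons' identity, then observing that the first-order terms produced by differentiating the shrinker equation are absorbed by the drift in $\mathcal{L}$ --- is essentially the standard derivation found there.
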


Colding and Minicozzi \cite{CM1} introduced the entropy $\lambda$ of a \hps\ $\Sigma$, defined as
\begin{equation}
\lambda (\Sigma) = \sup_{x_0,t_0} F_{x_0,t_0}(\Sigma) = \sup_{x_0,t_0}\  (4\pi t_0)^{-\frac{n}{2}} \int_{\Sigma} {\ee^{-\frac{|x-x_0|^2}{4t_0}}} d\mu,
\end{equation}
where the supremum is taking over all $t_0 >0$ and $x_0 \in \mathbf{R}^{n+1}$. It was proven in \cite{CM1} that for a self-shrinker, the entropy is achieved by the $F$-functional $F_{0,1}$, so no supremum is needed. Note that Cheng and Zhou \cite{CZ} (see also \cite{DX3}) proved that for self-shrinkers,  finite entropy, polynomial volume growth and properness are all equivalent.

\section{Curvature estimates for strictly mean convex shrinkers} \label{sec:curv}
This section is devoted to proving Theorem \ref{thm:curv2}. The proof requires some modifications of Ecker-Huisken's interior estimates for mean curvature flow \cite{EH2} (see also \cite{Eck}), in which the authors derive curvature estimates using the maximum principle under the assumption that the flow is locally graphical. For our estimates, the mean convexity will replace the local graphical assumption - in particular, the key ingredient is the identity $LH=H$ that holds on all shrinkers.

First, in order to apply the maximum principle, we describe the choice of cutoff functions and detail the relevant computations:

Fix $n$ and $\delta>0$. Let $\Sigma^n \subset \mf{R}^{n+1}$ be a self-shrinker which satisfies 
\begin{itemize}
\item[$(\star)$] $H \geq \delta$ on $ B_R \cap \Sigma$ for  $R>2$.
\end{itemize}
Set $v=1/H$ and $v_0=1/\delta$, then we have $v\leq v_0$ on  $B_R \cap \Sigma$. Lemma \ref{lemma:simons} gives that $LH=H$. Hence, $v$ satisfies the equation
\begin{equation}
\Delta v=2\frac{|\nabla H|^2}{H^3} - \frac{\Delta H}{H^2} = \frac{1}{2} \lb{x, \nabla v} + 2 \frac{|\nabla v|^2}{v} + \Big(|A|^2 - \frac{1}{2}\Big) v.
\end{equation}

We now fix the function \begin{equation}
h(y)=\frac{y}{1-ky},
\end{equation}
where $k=(2v_0^2)^{-1}$. Simple computations give that 
\begin{equation}
\label{eq:dhddh}
h'(y)=\frac{1}{(1-ky)^2}\,\,\,\,\,\text{and}\,\,\,\,\, h''(y) =\frac{2k}{(1-ky)^3}.
\end{equation}
For convenience, in what follows we will abuse notation slightly and write $h=h(v^2)$, $h'=h'(v^2)$ and so on.

Let $f = |A|^2 h$. Then we have
\begin{equation}
\Delta f= h \Delta |A|^2 +|A|^2 \Delta h +2\lb{\nabla |A|^2, \nabla h}.
\end{equation}
Note that
\begin{equation}
\nabla h= h' \nabla v^2= 2h' v \nabla v,\,\,\,\text{ and }\,\,\,
\Delta h =h' \Delta v^2 +h'' |\nabla v^2|^2.
\end{equation}
Combining this with Lemma \ref{lemma:simons} gives that 
\begin{equation}\label{equ:H_2}
\begin{split}
\Delta f & = h \Big(2|\nabla A|^2 + (1 - 2|A|^2) |A|^2 + \frac{1}{2} \lb{x, \nabla |A|^2}\Big) + |A|^2 \Big( h'' |\nabla v^2|^2 +h'\Delta v^2 \Big) \\ & + 2 \lb{\nabla |A|^2, \nabla h}.
\end{split}
\end{equation}
Now we estimate the right hand side of the equation (\ref{equ:H_2}). First, we have
\begin{equation}
2\lb{\nabla |A|^2, \nabla h}=\frac{\lb{\nabla h, \nabla f}}{h}-|A|^2 \frac{|\nabla h|^2}{h} + 4h'|A|v \lb{\nabla |A|, \nabla v}.
\end{equation}
Using the absorbing inequality gives that
\begin{equation}
4h'|A|v \lb{\nabla |A|, \nabla v} \leq \frac{2(h')^2 |A|^2 v^2 |\nabla v|^2}{h} + 2 h |\nabla |A||^2.
\end{equation}
This implies 
\begin{equation}
2\lb{\nabla |A|^2, \nabla h} \geq \frac{\lb{\nabla h, \nabla f}}{h} - 6 \frac{(h')^2 |A|^2 v^2 |\nabla v|^2}{h} - 2 h |\nabla |A||^2.
\end{equation}
We also have that
\begin{equation}
h''|\nabla v^2|^2 +h' \Delta v^2 =4 h'' v^2 |\nabla v|^2 + h' \Big[ v\lb{x,\nabla v}+4|\nabla v|^2 +  (2|A|^2-1)  v^2 +2 |\nabla v|^2\Big]
\end{equation}
and
\begin{equation}
\frac{1}{2}\lb{x, \nabla f} = \frac{h}{2} \lb{x, \nabla |A|^2} +\frac{|A|^2}{2} \lb{x,\nabla h}= \frac{h}{2} \lb{x, \nabla |A|^2} +|A|^2 h' v\lb{x, \nabla v}.
\end{equation}
Therefore, we obtain that
\begin{equation}\label{equ:H_5}
\begin{split}
\Delta f & \geq \frac{\lb{\nabla h, \nabla f}}{h} + \frac{1}{2}\lb{x, \nabla f} +(1-2|A|^2)|A|^2 h + 2h' v^2 |A|^4-h'v^2 |A|^2  \\ & + \Big[ 4h''v^2 + 6\Big( h'- \frac{(h')^2 v^2}{h} \Big) \Big] |A|^2 |\nabla v|^2.
\end{split}
\end{equation}

Now by the choice of $h$ (compare (\ref{eq:dhddh})), we have
\begin{equation}
h-h' v^2=-kh^2,
\end{equation}
and 
\begin{equation}
4h'' v^2 + 6\Big( h'- \frac{(h')^2 v^2}{h} \Big)=\frac{2k}{(1-kv^2)^2}h.
\end{equation}
Inserting these inequalities into (\ref{equ:H_5}) implies that 
\begin{equation}\label{equ:H}
\Delta f \geq \frac{\lb{\nabla h,\nabla f}}{h} + \frac{1}{2} \lb{x, \nabla f} - f + 2kf^2 +\frac{2k |\nabla v|^2}{(1-kv^2)^2} f.
\end{equation}
Here we used that $h'v^2|A|^2 \leq 2f$.
We will set 
\begin{equation}
a=\frac{\nabla h}{h} \,\,\,\, \text{and}\,\,\,\, d =\frac{2k |\nabla v|^2}{(1-kv^2)^2}.
\end{equation}

\begin{lemma}\label{lemma:cut-off}
Let $x_0 \in \mf{R}^{n+1}$ and $\rho>0$, and set $\phi (x)=(\mu(x))_+^3$, where $(\mu(x))_+ = \max(\mu (x),0)$ and $\mu (x)=\rho^2-|x-x_0|^2$. If $\Sigma^n$ is a shrinker, then on $B_\rho (x_0) \cap \Sigma$ we have
\begin{equation}
\Delta \phi= 24 \mu |(x-x_0)^T|^2 - 6n \mu^2 + 6 \mu^2 H \lb{x-x_0, \mf{n}}.
\end{equation} 
In particular, we have the estimate
\begin{equation}
|\Delta \phi (x)| \leq 24\mu \rho^2 +6n \mu^2 +3\mu^2 \rho |x| \leq (24 + 6n) \rho^4 +3\rho^3 |x|.
\end{equation}
\end{lemma}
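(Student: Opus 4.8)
The plan is to express $\Delta\phi$ via the chain rule applied to $t\mapsto t^3$ together with two elementary identities for the position vector restricted to a hypersurface. Abbreviate $y=x-x_0$, so that $\mu=\rho^2-|y|^2$. On \emph{any} smooth hypersurface $\Sigma^n\subset\mf{R}^{n+1}$ one has
\[
\nabla|y|^2 = 2y^T,\qquad \Delta|y|^2 = 2n + 2\lb{y,\Delta x} = 2n - 2H\lb{y,\mf{n}},
\]
where $y^T$ is the tangential part of $y$ and we have used $\Delta x = -H\mf{n}$ (the mean curvature vector of $\Sigma$); note that the self-shrinker equation is \emph{not} needed for this. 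Hence $\nabla\mu=-2y^T$, $|\nabla\mu|^2 = 4|y^T|^2$, and $\Delta\mu = -2n + 2H\lb{y,\mf{n}}$.

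First I would observe that $\phi=(\mu)_+^3$ is of class $C^2$ on $\Sigma$ and is compactly supported in $B_\rho(x_0)\cap\Sigma$: on $\{\mu<0\}$ it vanishes identically, on $\{\mu\geq 0\}=B_\rho(x_0)\cap\Sigma$ it equals the smooth function $\mu^3$, and along $\{\mu=0\}$ the function $\mu^3$ together with its first two derivatives vanishes, so the two branches glue in $C^2$. It therefore suffices to compute on $B_\rho(x_0)\cap\Sigma$, where
\[
\Delta\phi = \Delta(\mu^3) = 3\mu^2\Delta\mu + 6\mu|\nabla\mu|^2 = 3\mu^2\big(-2n + 2H\lb{y,\mf{n}}\big) + 24\mu|y^T|^2,
\]
which is exactly the asserted identity (and both sides vanish on $\{\mu=0\}$, so nothing is lost at the boundary).

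For the pointwise bound, on $B_\rho(x_0)\cap\Sigma$ we have $0\leq\mu\leq\rho^2$, $|y^T|\leq|y|\leq\rho$ and $|\lb{y,\mf{n}}|\leq|y|\leq\rho$; and \emph{here} the shrinker equation enters, giving $|H|=\tfrac12|\lb{x,\mf{n}}|\leq\tfrac12|x|$. Taking absolute values in the identity and inserting these bounds yields $|\Delta\phi|\leq 24\mu\rho^2 + 6n\mu^2 + 3\mu^2\rho|x|$, and estimating $\mu\leq\rho^2$ in each term gives the stated crude bound.

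This lemma is essentially a direct computation, so I do not expect a genuine obstacle; the two points deserving care are the sign and normalization conventions in $\Delta x=-H\mf{n}$ (which one can sanity-check on the round sphere, where $|x|^2$ is constant and both sides of $\Delta|x|^2=2n-2H\lb{x,\mf{n}}$ vanish), and the verification that $(\mu)_+^3$ is genuinely $C^2$ across its zero set — this regularity is precisely what makes $\phi$ an admissible test function in the maximum-principle arguments of the next section.
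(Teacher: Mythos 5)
Your proof is correct and is essentially the same computation as the paper's: the paper writes $\nabla\phi=-6\mu^2(x-x_0)^T$ and takes the divergence using $\dive_\Sigma (x-x_0)^T = n - H\lb{x-x_0,\mf{n}}$, which is exactly your chain-rule expansion via $\Delta\mu$ and $|\nabla\mu|^2$, and both arguments invoke the shrinker equation only for the final estimate via $|H|\leq\tfrac12|x|$. Your added remarks on the $C^2$ regularity of $(\mu)_+^3$ and on where the shrinker equation is actually used are accurate but not needed for the paper's argument.
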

\begin{proof}
Since $\nabla \phi =-3(\rho^2-|x-x_0|^2)^2 \nabla |x-x_0|^2=-6\mu^2 (x-x_0)^T$, we have
\begin{equation}
\begin{split}
\Delta \phi & = -6 \,\text{div}(\mu^2 (x-x_0)^T) \\ & =-6 \Big[2\mu \lb{\nabla \mu, (x-x_0)^T}+ \mu^2 \Big(n-\lb{x-x_0,\mf{n}}H\Big)\Big] \\ & = 24 \mu |(x-x_0)^T|^2-6n\mu^2 +6 \mu^2 H \lb{x-x_0, \mf{n}}.
\end{split}
\end{equation}
The second claim follows easily from the shrinker equation and the fact that $\mu \leq \rho^2$.
\end{proof}

We are now ready to prove our main curvature estimate.

\subsection{Proof of  Theorem \ref{thm:curv2}}

Now fix a point $x_0 \in B_{R-1}\cap \Sigma$ and set $\rho = R- |x_0|$. Let $\phi$ be the function defined in Lemma \ref{lemma:cut-off}. We will work on $B_\rho(x_0)\cap \Sigma$. Using (\ref{equ:H}) gives that 
\begin{equation}
\begin{split}
\Delta (\phi f) & =\phi \Delta f + f \Delta \phi +2 \lb{\nabla \phi, \nabla f} \\ & \geq \phi \Big[\lb{a+\frac{x}{2}, \nabla f}-f + 2kf^2 + df\Big] + f \Delta \phi +2\lb{\nabla \phi, \nabla f}.
\end{split}
\end{equation}
Note that 
\begin{equation}
\lb{a,\nabla (\phi f)}=\phi \lb{a, \nabla f} +f\lb{a,\nabla \phi}
\end{equation}
and 
\begin{equation}
\lb{\nabla \phi, \nabla (f\phi)}=f|\nabla \phi|^2 + \phi \lb{\nabla \phi, \nabla f}.
\end{equation}
This implies 
\begin{equation}\label{equ:HH_7}
\begin{split}
\Delta (\phi f) & \geq \lb{a+\frac{x}{2},\nabla (\phi f)} - \lb{a+\frac{x}{2},\nabla \phi} f +\phi \Big[(d-1) f + 2kf^2\Big] \\ & + f \Delta \phi + \frac{2}{\phi} \lb{\nabla \phi, \nabla (f\phi)} -2\frac{|\nabla \phi|^2}{\phi} f.
\end{split}
\end{equation}
Now we set $F(x)=\phi (x) f(x)$ and consider its maximum on $B_\rho (x_0) \cap \Sigma$. Since $F$ vanishes on $\partial B_\rho (x_0) \cap \Sigma$, $F$ achieves its maximum at some point $y_0 \in B_\rho (x_0)\cap \Sigma$. At the point $y_0$, we have 
\begin{equation}
\nabla F(y_0)=0\,\,\, \text{and} \,\,\, \Delta F(y_0)\leq 0.
\end{equation}

In the following, we will work at the point $y_0$. By (\ref{equ:HH_7}) and  $f(y_0)>0$,  we have
\begin{equation}\label{equ:HH_12}
\lb{a + \frac{y_0}{2}, \nabla \phi} + 2\frac{|\nabla \phi|^2}{\phi} \geq \phi (d - 1) + 2k\phi f + \Delta \phi.
\end{equation}
Note that
\begin{equation}
|a|^2 = 4 \Big( \frac{h'}{h} \Big)^2 v^2 |\nabla v|^2 =\frac{2}{k v^2}d \leq \frac{|y_0|^2}{2 k} d.
\end{equation}
This yields that
\begin{equation}\label{equ:HH_15}
\lb{a,\nabla \phi} \leq (d+1) \phi +\frac{|a|^2}{4(d +1) } \frac{|\nabla \phi|^2}{\phi} \leq (d+1) \phi + \frac{|y_0|^2}{8 k} \frac{|\nabla \phi|^2}{\phi}.
\end{equation}
Combining (\ref{equ:HH_15}) with (\ref{equ:HH_12}) gives that 
\begin{equation}\label{equ:HH_17}
2 k \phi f \leq -\Delta \phi +2\phi + \Big(2 +\frac{|y_0|^2}{8 k} \Big)\frac{|\nabla \phi|^2}{\phi} +\frac{|y_0|}{2}  |\nabla \phi|.
\end{equation}
By the definition of $\phi$, we have
\begin{equation}
\phi \leq \rho^6,\,\,\,\, |\nabla \phi| \leq 6\rho^5\,\,\,\,\text{ and }\,\,\,\, \frac{|\nabla \phi|^2}{\phi}\leq 36 \rho^4.
\end{equation}
Combining this with Lemma \ref{lemma:cut-off}, $|y_0|\leq R$ and (\ref{equ:HH_17}) yields that
\begin{equation}
F(y_0)=\phi (y_0)f(y_0)\leq C (\rho^6 + R \rho^5 + R^2 \rho^4),
\end{equation} 
where $C$ is a constant depending on $n$ and $\delta$.

Since $F$ achieves its maximum at $y_0$, we have $F(x_0) \leq F(y_0)$. This implies 
\begin{equation}
\frac{\rho^6 |A|^2 (x_0)}{H^2(x_0) - k} = F(x_0) \leq F(y_0) \leq C (\rho^6 + R \rho^5 + R^2 \rho^4).
\end{equation}
In particular, we have
\begin{equation}
|A|(x_0) \leq C  \Big(1 + \frac{R}{\rho} \Big)H(x_0).
\end{equation}
Since $x_0$ is an arbitrary point in $B_{R-1}\cap \Sigma$, this completes the proof of Theorem \ref{thm:curv2}.

\section{Rigidity theorems for mean convex shrinkers}
\label{sec:rigidity}

In this section we prove Theorems \ref{thm:H>0} and \ref{thm:main} by adapting the iteration and improvement scheme used to prove \cite[Theorem 0.1]{CM5}. For convenience of the reader, we briefly outline this scheme here; recall that the two key ingredients are the so-called iterative step \cite[Proposition 2.1]{CM5} and the improvement step \cite[Proposition 2.2]{CM5} (compare Proposition \ref{prop:v:u:improve} below). In the iterative step, it is shown that if a self-shrinker is almost cylindrical (quantified by $H$ and $|A|$) on a large scale, then it is still close to a cylinder on a larger scale, albeit with some loss in the estimates. It is important here that the scale extends by a fixed multiplicative factor.

\begin{proposition}(Iteration; \cite[Proposition 2.1]{CM5}) \label{prop:v:u:step1} Given $\lambda_0 < 2$ and $n$, there exist
  positive constants $R_0$, $\delta_0$, $C_0$ and $\theta$ so that if $\Sigma^n \subset \mf{R}^{n+1}$ is a shrinker with
  $\lambda (\Sigma) \leq \lambda_0$, $R \geq R_0$, and
  \begin{itemize}
  \item $B_R \cap \Sigma$ is smooth with $H\geq 1/4$ and $|A| \leq
    2$,
  \end{itemize}
  then $B_{(1+\theta)R} \cap \Sigma$ is smooth with $H \geq \delta_0 $
  and $|A| \leq C_0$.
\end{proposition}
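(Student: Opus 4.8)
Proposition \ref{prop:v:u:step1} is \cite[Proposition 2.1]{CM5}, so the plan is to reprove it following Colding--Ilmanen--Minicozzi. The idea is to first convert the pointwise bounds $H\geq 1/4$, $|A|\leq 2$ on $B_R$ into genuine closeness to a round cylinder, and then to propagate that closeness --- and hence the curvature bounds --- outward by a fixed multiplicative factor, exploiting the drift term $-\frac12\langle x,\nabla\cdot\rangle$ of the shrinker operator. The first ingredient is a fixed-scale dichotomy: there are $r_1=r_1(n,\lambda_0)$ and $\varepsilon_1=\varepsilon_1(n)>0$ so that if $\Sigma$ is a shrinker with $\lambda(\Sigma)\leq\lambda_0<2$ and $B_{r_1}\cap\Sigma$ is smooth with $H\geq 1/4$ and $|A|\leq 2$, then, after a rotation, $\Sigma$ is a normal graph with $C^3$-norm at most $\varepsilon_1$ over $B_{10}\cap(\mf{S}^k\times\mf{R}^{n-k})$ for some $1\leq k\leq n$. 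This I would prove by contradiction: letting $r_1\to\infty$ one extracts a $C^\infty_{\mathrm{loc}}(\mf{R}^{n+1})$ subsequential limit --- of multiplicity one because $\lambda_0<2$, with polynomial volume growth by \cite{CZ}, and with the higher-order bounds supplied by $|A|\leq 2$ and elliptic regularity --- which is a smooth complete embedded shrinker with $H\geq 1/4>0$, hence a generalized cylinder by Theorem \ref{thm:huisken} (and $k\geq 1$, since $H>0$); this contradicts being $\varepsilon_1$-far from every such cylinder on the fixed ball $B_{10}$.

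The second ingredient is a continuity/barrier argument propagating the graphical description outward. Writing $\Sigma$ as a normal graph of $u$ over the cylinder $\mf{C}=\mf{S}^k\times\mf{R}^{n-k}$, the shrinker equation becomes a quasilinear elliptic equation for $u$ whose linearization is the Jacobi operator $L_{\mf{C}}=\mathcal{L}_{\mf{C}}+|A_{\mf{C}}|^2+\frac12=\mathcal{L}_{\mf{C}}+1$ of the cylinder (cf. Lemma \ref{lemma:simons}). Where $|x|$ is large the drift term $-\frac12\langle x,\nabla\cdot\rangle$ dominates, and for $\theta$ a small fixed constant one builds a positive, increasing supersolution $\psi$ of $L_{\mf{C}}$ on the relevant annulus whose sup is comparable to the size of $u$ on the inner boundary; conjugating $L_{\mf{C}}$ by $\psi$ turns its zeroth-order term into one of the favorable sign, so the maximum principle together with interior Schauder estimates --- the equation being uniformly elliptic while $|\nabla u|$ stays small --- propagates control of $|u|+|\nabla u|+|\nabla^2 u|$ along a continuity argument in the radius, out to $B_{(1+\theta)R}$. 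The graph formulas for $A$ and $H$ then translate this into smoothness together with $H\geq\delta_0$ and $|A|\leq C_0$ on $B_{(1+\theta)R}\cap\Sigma$; the loss --- that $\delta_0$ is smaller than $1/4$ and $C_0$ larger than $2$ --- is forced by the fact that $\psi$ must be increasing.

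The hard part --- and the reason one gets only a multiplicative gain and must assume $\lambda_0<2$ --- is the instability of the cylinder: the $|A_{\mf{C}}|^2+\frac12$ term in $L_{\mf{C}}$ rules out any \emph{bounded} positive supersolution on an unbounded region, so the barrier step is self-contained only once $|x|$ exceeds a fixed scale at which the drift dominates. Bridging that scale down to the fixed ball produced by the compactness step, tracking the slow drift of the model cylinder as the radius grows, and using the entropy bound to exclude the dilation instability that the drift cannot tame, is the delicate bookkeeping carried out in \cite{CM5}. (The interior curvature estimate of Theorem \ref{thm:curv2} is not used here; its role in this paper is instead to remove the a priori bound on $|A|$ from the rigidity theorems of which this proposition is a component.)
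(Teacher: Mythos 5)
First, a point of comparison: the paper does not prove this proposition at all --- it is quoted verbatim from \cite[Proposition 2.1]{CM5} and used as a black box in the iteration--improvement scheme, so there is no in-paper argument to measure your proposal against. Judged on its own terms, your first ingredient (the fixed-scale compactness dichotomy: multiplicity one from $\lambda_0<2$, smooth subconvergence from $|A|\le 2$ and elliptic regularity, classification of the limit by Theorem \ref{thm:huisken}, hence $C^3$-closeness to some $\mf{S}^k\times\mf{R}^{n-k}$ with $k\ge 1$ on a fixed ball) is correct and is indeed how \cite{CM5} begins. Your closing parenthetical is also right: Theorem \ref{thm:curv2} plays no role here.

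The gap is in your second ingredient, which is where the actual content of the proposition lies. The mechanism you describe --- writing $\Sigma$ as a graph $u$ over the cylinder and propagating smallness via positive supersolutions of the Jacobi operator $\mathcal{L}+1$ in the region where the drift dominates --- is the mechanism of the \emph{improvement} step (Proposition \ref{prop:v:u:improve}, i.e.\ \cite[Proposition 2.2]{CM5}), where one moves \emph{inward} and loses a fixed additive amount of scale. It cannot, as stated, produce the outward multiplicative gain: to control $u$ on the annulus $B_{(1+\theta)R}\setminus B_R$ by the maximum principle you need data on \emph{both} boundary components, and the outer boundary is precisely what you are trying to estimate. The same obstruction kills the continuity argument in the radius: openness requires an a priori improvement of the estimate up to the outer sphere $\partial B_\rho$, which an elliptic comparison argument on $B_\rho$ cannot deliver at $\partial B_\rho$ itself. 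What \cite{CM5} actually does for Proposition 2.1 is parabolic: since $M_t=\sqrt{-t}\,\Sigma$ is a mean curvature flow, the hypotheses on $B_R\cap M_{-1}$ rescale to the same bounds (up to factors of $1+\theta$) on $B_{(1+\theta)R}\cap M_{-(1+\theta)^2}$, and interior estimates for mean convex MCF with bounded curvature (Ecker--Huisken-type curvature bounds together with a localized preservation of $H>0$) transport these bounds forward in time from $t=-(1+\theta)^2$ to $t=-1$ at the cost of an \emph{additive} loss of scale; since the multiplicative gain $\theta R$ beats the additive loss once $R\ge R_0$, the proposition follows. The parabolic argument needs only ``initial'' data at the earlier time, not outer boundary data, which is exactly why it succeeds where the elliptic barrier stalls. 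If you want to reconstruct the proof rather than cite it, this time-slicing idea is the missing key step.
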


On the other hand, in the improvement step, it is shown that if a shrinker is close to a cylinder on some scale, then the estimates can be improved so long as we decrease the scale by a fixed amount. We will show that the initial closeness in the improvement step only needs to be quantified by $H$ --- using our curvature estimate Theorem \ref{thm:curv2} and a compactness result of shrinkers, we can show that the bounded curvature assumption in the improvement step (Proposition 2.2 of \cite{CM5}) can be removed, which in turn implies Theorem \ref{thm:H>0}. Our improvement step is stated as follows:

\begin{proposition}(Improvement) \label{prop:v:u:improve} 
Given $n$ and $\lambda_0$, let $\delta_0 \in (0,1/4)$ be given by Proposition \ref{prop:v:u:step1}. Then there exists $R=R(n,\lambda_0)$ so that if $\Sigma^n \subset \mf{R}^{n+1}$ is a shrinker with $\lambda (\Sigma) \leq
  \lambda_0$ and
  \begin{itemize}
  \item $H\geq \delta_0$ on $B_{R} \cap \Sigma$,
  \end{itemize}
  then $H\geq 1/4$ and $|A| \leq 2$ on $B_{R-4} \cap \Sigma$.
\end{proposition}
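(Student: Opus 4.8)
The plan is to argue by contradiction using a compactness argument, with Theorem \ref{thm:curv2} supplying the missing curvature bound that is present as a hypothesis in \cite[Proposition 2.2]{CM5}. Suppose the statement fails; then there is a sequence of self-shrinkers $\Sigma_i^n \subset \mf{R}^{n+1}$ with $\lambda(\Sigma_i) \leq \lambda_0$ and $H \geq \delta_0$ on $B_{R_i} \cap \Sigma_i$, with $R_i \to \infty$, but such that on $B_{R_i - 4} \cap \Sigma_i$ the conclusion fails --- i.e.\ either $H < 1/4$ or $|A| > 2$ somewhere in $B_{R_i-4}\cap \Sigma_i$. First I would apply Theorem \ref{thm:curv2} with $\delta = \delta_0$: since $H \geq \delta_0$ on $B_{R_i}\cap\Sigma_i$, we get $|A|(x) \leq \frac{C R_i}{R_i - |x|} H(x)$ for $x \in B_{R_i - 1}\cap\Sigma_i$, hence on any fixed ball $B_\rho$, for $i$ large, $|A|$ is bounded in terms of $\sup_{B_\rho} H$. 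To bound $H$ from above I would use the equation $LH = H$ together with $H\geq\delta_0$ and the entropy bound: on $B_{R_i}$ the function $v = 1/H \leq 1/\delta_0$ satisfies a nice elliptic equation (the one displayed in Section \ref{sec:curv}), and since $H > 0$ one expects a local upper bound for $H$ depending only on $n, \lambda_0, \delta_0$ --- alternatively this follows from the curvature estimate itself combined with a standard barrier/mean-value argument for shrinkers with bounded entropy. Thus $|A| + |H|$ is locally uniformly bounded on the $\Sigma_i$.

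With uniformly bounded curvature and bounded entropy (hence, by \cite{CZ}, uniform polynomial volume growth), I would invoke the compactness theorem for self-shrinkers (Colding--Minicozzi; see \cite{CM1}): after passing to a subsequence, $\Sigma_i \to \Sigma_\infty$ in $C^\infty_{\mathrm{loc}}$, where $\Sigma_\infty$ is a smooth complete embedded self-shrinker with $\lambda(\Sigma_\infty) \leq \lambda_0$, polynomial volume growth, and $H \geq \delta_0 > 0$ everywhere (the lower bound passes to the limit on every ball since $R_i\to\infty$). By Theorem \ref{thm:huisken}, $\Sigma_\infty$ must be a generalized cylinder $\mf{S}^k \times \mf{R}^{n-k}$; since $H \geq \delta_0 > 0$ rules out the hyperplane ($k=0$), we have $1 \leq k \leq n$. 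On such a cylinder one computes explicitly $H = \sqrt{k/2} \geq \sqrt{1/2} > 1/4$ and $|A|^2 = 1/(2k) \leq 1/2 < 4$, with \emph{strict} inequalities; in fact one should fix the radius $R$ up front (depending only on $n, \lambda_0$, via $\delta_0$) so that, using the quantitative estimates of Lemma \ref{cor:v:u:taubound_1} alluded to in the introduction, the closeness is uniform. Then $C^\infty_{\mathrm{loc}}$ convergence forces, for all large $i$, $H > 1/4$ and $|A| < 2$ on $B_{R_i - 4}\cap\Sigma_i$ --- wait, more carefully: we only get convergence on compact sets, so we instead fix a large radius $\rho_0$ such that the conclusion of the proposition on $B_{\rho_0 - 4}$ would already be violated, and derive the contradiction there. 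This contradicts the choice of the $\Sigma_i$.

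The main obstacle is obtaining the \emph{upper} bound for $H$ (equivalently, a definite lower bound for $v = 1/H$) on fixed balls, uniformly along the sequence: the curvature estimate Theorem \ref{thm:curv2} controls $|A|$ only in terms of $H$, so without an a priori upper bound on $H$ one cannot close the compactness argument. I expect this to be handled by a Simons-type or mean-value argument exploiting $\mathcal{L}|A|^2 = |A|^2 - 2|A|^4 + 2|\nabla A|^2$ and $LH=H$ on a ball where $H$ is already bounded below, or by noting that a shrinker with $H$ bounded below and entropy bounded above cannot have $H$ too large on a fixed ball without violating the volume bound --- this is exactly the kind of cylindrical estimate recorded in Lemma \ref{cor:v:u:taubound_1}, which the introduction flags as being of independent interest. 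A secondary point requiring care is that the compactness theorem for self-shrinkers as usually stated needs local area bounds; these are supplied by the entropy bound $\lambda(\Sigma_i)\leq\lambda_0$ via monotonicity. Once both bounds are in hand, the rest is the standard contradiction-and-convergence scheme of \cite{CM5}, with the cylinder rigidity Theorem \ref{thm:huisken} as the endgame.
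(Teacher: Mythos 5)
There is a genuine gap, and it is exactly at the place where you hesitate (``wait, more carefully\dots''). The conclusion of the proposition is an estimate on $B_{R-4}\cap\Sigma$, i.e.\ on a ball whose radius grows with $R$, whereas your compactness argument can only yield smooth closeness to a cylinder on \emph{fixed} compact sets. In your contradiction setup the bad points $x_i\in B_{R_i-4}\cap\Sigma_i$ where $H<1/4$ or $|A|>2$ may escape to infinity, and you cannot recenter at $x_i$ because the shrinker equation is not translation invariant. Your proposed repair --- fixing a radius $\rho_0$ and deriving the contradiction on $B_{\rho_0-4}$ --- proves a strictly weaker statement (good estimates only on a fixed ball), and that weaker statement cannot drive the iteration scheme: Proposition \ref{prop:v:u:step1} extends the scale by a multiplicative factor $(1+\theta)$ and the improvement step must recover essentially the full scale $R$, losing only the additive constant $4$; otherwise the scales never grow. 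This is precisely why the paper's proof does \emph{not} rest on compactness alone: compactness is used only to get $C^2$ $\varepsilon_0$-closeness to a cylinder on the fixed ball $B_{5\sqrt{2n}}$, and the propagation out to $B_{R-4}$ is done via the exponentially decaying derivative estimates on $\tau=A/H$ from Lemma \ref{cor:v:u:taubound_1} (which in turn uses Theorem \ref{thm:curv2} to replace the $|A|\leq C$ hypothesis of \cite{CM5} by $|A|\leq CRH$, at the cost of a larger polynomial prefactor that the exponential still beats). Since $\tau$ is nearly parallel on all of $B_{R-2}$, the cylindrical structure established on $B_{5\sqrt{2n}}$ extends to $B_{R-4}$, following the proof of Proposition 2.2 in \cite{CM5}. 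Without an ingredient of this kind your argument does not close.

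A secondary remark: the ``main obstacle'' you identify --- an upper bound for $H$ on fixed balls --- is not an obstacle at all. The shrinker equation $H=\frac{1}{2}\langle x,\mathbf{n}\rangle$ gives $|H|\leq |x|/2$ immediately, so $H$ is bounded on any compact set and Theorem \ref{thm:curv2} then bounds $|A|$ there; no Simons-type or mean-value argument is needed for this.
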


The main argument in the improvement step is to control the derivatives of the tensor $\tau = A/H$. These estimates are shown to decay exponentially as $R^\alpha \ee^{-R/4}$ for some $\alpha$, allowing one to extend good cylindrical estimates from a fixed scale $5\sqrt{2n}$ to almost the whole ball of radius $R$. For us, instead of assuming $|A|\leq C$ for some constant $C$ as in \cite{CM5}, our curvature estimates give that $|A|\leq C R$ for shrinkers with positive mean curvature $H$ in $B_R$. In the proof of Proposition \ref{prop:v:u:improve}, we show that this is still enough to control the derivatives of $\tau$, possibly with a worse exponent $\alpha$ of $R$. But the exponential factor still decays much faster than any polynomial factor, so the polynomial factor can be eventually absorbed into the exponential factor as long as we choose $R$ sufficiently large. The remaining details of our proof will be deferred to Section \ref{sec:improvement}.\\ 

To complete the iteration and improvement scheme, we first apply Proposition \ref{prop:v:u:improve}, then apply Proposition \ref{prop:v:u:step1} and repeat the process. The multiplicative factor extends the scale by more than the fixed decrease if $R$ is large enough, so we get strict mean convexity on all of $\Sigma$, which must therefore be a cylinder by the classification of mean convex shrinkers (Theorem \ref{thm:huisken}). Thus we have:
\begin{proposition}\label{prop:v:u:entropy2} 
Given $n$ and $\lambda_0<2$, let $\delta_0 \in (0,1/4)$ be given by Proposition \ref{prop:v:u:step1}.
Then there exists $R=R(n,\lambda_0)$ so that if $\Sigma^n\subset \mf{R}^{n+1}$ is a shrinker with entropy $\lambda (\Sigma) \leq \lambda_0$  which satisfies
\begin{itemize}
\item $ H \geq \delta_0 $ on $ B_R \cap \Sigma$, 
\end{itemize}
then $\Sigma$ is a generalized cylinder $\mf{S}^k \times \mf{R}^{n-k}$ for some $1\leq k \leq n$.
\end{proposition}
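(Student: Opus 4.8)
The plan is to iterate Proposition \ref{prop:v:u:improve} and Proposition \ref{prop:v:u:step1} in alternation, starting from the hypothesis $H \geq \delta_0$ on $B_R \cap \Sigma$. First I would fix $n$ and $\lambda_0 < 2$, and let $R_0$, $\delta_0$, $C_0$, $\theta$ be the constants produced by the iteration step (Proposition \ref{prop:v:u:step1}). Applying Proposition \ref{prop:v:u:improve} to $\Sigma$ (which requires only the hypothesis $H \geq \delta_0$ on $B_R \cap \Sigma$ and $\lambda(\Sigma) \leq \lambda_0$, provided $R$ is at least the threshold there) upgrades the estimate to $H \geq 1/4$ and $|A| \leq 2$ on $B_{R-4} \cap \Sigma$. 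This puts $\Sigma$ into exactly the hypothesis of Proposition \ref{prop:v:u:step1} on the ball $B_{R-4}$, so — as long as $R - 4 \geq R_0$ — we conclude that $B_{(1+\theta)(R-4)} \cap \Sigma$ is smooth with $H \geq \delta_0$ and $|A| \leq C_0$; in particular $H \geq \delta_0$ on the larger ball $B_{(1+\theta)(R-4)} \cap \Sigma$.

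The key observation is that for $R$ large, $(1+\theta)(R-4) > R$: indeed $(1+\theta)(R-4) - R = \theta R - 4(1+\theta)$, which is positive once $R > 4(1+\theta)/\theta$. So after one full cycle (improvement then iteration) we have recovered the hypothesis $H \geq \delta_0$ on a strictly larger ball $B_{R_1} \cap \Sigma$ with $R_1 = (1+\theta)(R-4) > R$, and $R_1 - R \geq \theta R - 4(1+\theta)$ is bounded below by a fixed positive constant. I would then repeat: define $R_{j+1} = (1+\theta)(R_j - 4)$, and observe that this recursion has the explicit solution $R_j = (1+\theta)^j\big(R - \tfrac{4(1+\theta)}{\theta}\big) + \tfrac{4(1+\theta)}{\theta}$, so $R_j \to \infty$ geometrically once the initial radius $R$ exceeds the fixed threshold $\max\{R_0 + 4,\ \text{threshold from Prop.~\ref{prop:v:u:improve}},\ 4(1+\theta)/\theta + 1\}$. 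Along the way one must check that the radii stay large enough to keep reapplying both propositions — but since $R_j$ is increasing, once the first step goes through all subsequent ones do too.

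Passing to the limit $j \to \infty$, we obtain $H \geq \delta_0 > 0$ on all of $\Sigma$ (every point of $\Sigma$ lies in some $B_{R_j}$). In particular $\Sigma$ is a smooth complete embedded self-shrinker with $H \geq 0$; since $\Sigma$ has finite entropy $\lambda(\Sigma) \leq \lambda_0$, it has polynomial volume growth by the result of Cheng–Zhou \cite{CZ}, so Theorem \ref{thm:huisken} applies and forces $\Sigma = \mf{S}^k \times \mf{R}^{n-k}$ for some $0 \leq k \leq n$. Finally, strict mean convexity $H \geq \delta_0 > 0$ everywhere rules out the hyperplane case $k = 0$ (where $H \equiv 0$), so in fact $1 \leq k \leq n$, as claimed.

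The main obstacle is not in this bookkeeping argument but is entirely contained in Proposition \ref{prop:v:u:improve}, whose proof (deferred to Section \ref{sec:improvement}) must replace the bounded-curvature input of \cite[Proposition 2.2]{CM5} with the weaker bound $|A| \leq CR$ coming from Theorem \ref{thm:curv2} together with a compactness argument, and then verify that the exponential decay $R^\alpha \ee^{-R/4}$ of the derivatives of $\tau = A/H$ still beats the extra polynomial factor in $R$ once $R$ is taken large enough. Once that proposition is in hand, the iteration-improvement scheme above closes the loop cleanly.
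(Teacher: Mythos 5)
Your proposal is correct and follows essentially the same route as the paper, which proves this proposition exactly by alternating Proposition \ref{prop:v:u:improve} and Proposition \ref{prop:v:u:step1}, noting that the multiplicative gain $(1+\theta)$ beats the fixed loss of $4$ once $R$ is large, and then concluding via Theorem \ref{thm:huisken}. Your write-up simply makes the recursion $R_{j+1}=(1+\theta)(R_j-4)$ and the threshold bookkeeping explicit, which the paper leaves implicit.
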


We also need the following compactness theorem for self-shrinkers which plays an important role in our argument: 

\begin{lemma}[Compactness]\label{lemma:v:u:cmpt}
Let $\Sigma_i \subset \mf{R}^{n+1}$ be a sequence of shrinkers with $\lambda(\Sigma_i) \leq \lambda_0$ and 
\begin{equation}
|A| (x) \leq C (1+ |x|)\,\,\, \text{ on }\,\,\, B_i\cap \Sigma_i.
\end{equation}
Then there exists a subsequence $\Sigma_i'$
  that converges smoothly and with multiplicity one to a complete
  embedded shrinker $\Sigma$ with 
 \begin{align} |A| (x) \leq C (1+ |x|)\,\, \text{ and }\,\,\, \lim_{i \to \infty} \, \lambda
    (\Sigma_i') = \lambda (\Sigma) \, .
  \end{align}  
  \end{lemma}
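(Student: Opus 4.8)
The plan is to establish this compactness statement as a routine application of the standard compactness machinery for self-shrinkers, using the hypothesized curvature bound $|A|(x)\leq C(1+|x|)$ to obtain \emph{local} uniform curvature bounds, together with the entropy hypothesis to control volume growth and rule out concentration. First I would observe that on any fixed ball $B_\rho$, once $i$ is large enough that $B_i \supset B_\rho$, the bound $|A|\leq C(1+|x|)\leq C(1+\rho)$ gives a uniform second fundamental form bound on $B_\rho\cap\Sigma_i$; combined with the shrinker equation $H=\tfrac12\lb{x,\mf n}$ this also bounds $H$, hence $|\nabla \mf n|$, on $B_\rho$. The entropy bound $\lambda(\Sigma_i)\leq\lambda_0$ gives a uniform upper bound for the Gaussian-weighted area, and since the Gaussian weight is bounded below on $B_\rho$, this yields a uniform area bound $\area(B_\rho\cap\Sigma_i)\leq C(\rho,\lambda_0)$. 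With a uniform curvature bound and uniform area bound on each fixed ball, the properly embedded hypersurfaces $\Sigma_i$ have uniformly bounded geometry locally.

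The next step is to extract the limit. By the uniform local curvature bounds and Schauder/elliptic estimates applied to the self-shrinker equation (which, in graphical gauge, is a quasilinear elliptic PDE), one gets uniform local $C^{k,\alpha}$ bounds for all $k$; this is exactly the setting of Colding--Minicozzi's compactness theorem for self-shrinkers (see \cite{CM1}), or one may invoke the local regularity and compactness for hypersurfaces with bounded second fundamental form and bounded area. A diagonal argument over an exhaustion $B_1\subset B_2\subset\cdots$ produces a subsequence $\Sigma_i'$ converging in $C^\infty_{loc}$ to a smooth limit $\Sigma$. Because the convergence is smooth, $\Sigma$ satisfies the self-shrinker equation $H=\tfrac12\lb{x,\mf n}$ and inherits the pointwise bound $|A|(x)\leq C(1+|x|)$; it is complete since the $\Sigma_i$ are properly embedded and the curvature bound prevents the surfaces from "escaping to infinity" on any bounded set. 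To confirm multiplicity one, I would either note that in the mean-convex/graphical applications the surfaces are uniformly locally graphical near the relevant base point, or more robustly use that the entropy bound $\lambda_0<2$ (as in the ambient hypotheses of the paper) together with Stone/White's local regularity forces multiplicity one; alternatively, embeddedness of each $\Sigma_i$ passes to the limit as embeddedness with multiplicity one provided the limit is not a multiple cover, which again follows from the entropy bound.

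Finally, for the entropy continuity $\lim_i \lambda(\Sigma_i')=\lambda(\Sigma)$: upper semicontinuity $\lambda(\Sigma)\leq\liminf_i\lambda(\Sigma_i')$ is immediate from Fatou's lemma applied to each $F_{x_0,t_0}$ functional under smooth local convergence, since for shrinkers $\lambda(\Sigma)=F_{0,1}(\Sigma)$ and $F_{0,1}(\Sigma)=\lim_i F_{0,1}(\Sigma_i')$ once we control the tails. The reverse inequality, i.e.\ no loss of entropy in the limit, is the one genuinely delicate point: one must show that the Gaussian-weighted area does not escape to spatial infinity. This is where the curvature bound $|A|(x)\leq C(1+|x|)$ does real work — it gives, via a standard covering and monotonicity argument (as in \cite{CM1}, \cite{CZ}), uniform polynomial volume growth estimates $\vol(B_\rho\cap\Sigma_i)\leq C\rho^N$ with $N$ independent of $i$, so the Gaussian tails $\int_{\Sigma_i'\setminus B_\rho}e^{-|x|^2/4}$ are uniformly small, giving $F_{0,1}(\Sigma_i')\to F_{0,1}(\Sigma)$ and hence $\lambda(\Sigma_i')\to\lambda(\Sigma)$.

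The main obstacle I anticipate is precisely this last point — the uniform decay of the Gaussian tails. One needs the pointwise curvature growth bound to be converted into a \emph{uniform} polynomial volume growth bound; while each $\Sigma_i$ individually has polynomial volume growth (being a properly embedded shrinker with finite entropy), the point is to make the growth rate and constant uniform in $i$, which follows from the uniform constant $C$ in the hypothesis $|A|(x)\leq C(1+|x|)$ by integrating the mean curvature bound against the monotonicity formula. Everything else — the diagonal extraction, smooth convergence, and preservation of the shrinker equation and the curvature inequality — is standard elliptic compactness.
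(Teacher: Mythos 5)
Your extraction argument (uniform local $|A|$ bounds from the hypothesis, local area bounds from the entropy, elliptic estimates, covering plus diagonalization, passage of the shrinker equation and the curvature bound to the limit) matches the paper's proof, which runs exactly this way following Lemma 2.7 of \cite{CM5}. The entropy-convergence discussion is also fine in substance, though the uniform polynomial volume growth needed for the Gaussian tail estimate comes directly from the entropy bound itself (taking $t_0=\rho^2$ in $F_{x_0,t_0}$ gives $\vol(B_\rho\cap\Sigma_i)\leq C_n\lambda_0\rho^n$ uniformly in $i$), not from the curvature hypothesis as you suggest.

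The genuine gap is the multiplicity-one step. Every justification you offer — Stone/White local regularity, or embeddedness passing to the limit "provided the limit is not a multiple cover" — ultimately leans on the entropy bound being less than $2$ (or on Gaussian density close to $1$), but the lemma assumes only $\lambda(\Sigma_i)\leq\lambda_0$ for an \emph{arbitrary} $\lambda_0$, and indeed the paper applies the lemma in the proofs of Theorems \ref{thm:H>0} and \ref{thm:main} \emph{before} it knows the entropy is below $2$. Your fallback to "the mean-convex/graphical applications" is not a proof of the lemma as stated. The paper's actual argument is different and works for any $\lambda_0$: if the convergence had multiplicity at least two, the normalized height difference between consecutive sheets would produce, in the limit, a positive solution of $Lu=0$ type, forcing the limit shrinker $\Sigma$ to be $L$-stable; this contradicts Colding--Minicozzi's theorem (Theorem 0.5 in \cite{CM2}) that there are no $L$-stable complete shrinkers with polynomial volume growth. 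You need this (or an equivalent argument valid for arbitrary $\lambda_0$) to close the proof.
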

  \begin{proof}
The key is that the a priori bound on $|A|$ is uniform on compact subsets. Thus, as in Lemma 2.7 in \cite{CM5}, for any $R$ we may obtain smooth convergence in $B_R$ by covering with a finite number of balls. Passing to a diagonal argument gives the overall smooth convergence to a smooth, complete, embedded shrinker $\Sigma$ with $\lambda(\Sigma)\leq \lambda_0$. Again arguing as in \cite{CM5}, if multiplicity is greater than one then the limit $\Sigma$ must be $L$-stable. But there are no such shrinkers with polynomial volume growth (see Theorem 0.5 in \cite{CM2}), so the multiplicity must be one.
\end{proof}



Now we are ready to prove Theorem \ref{thm:main}.

\begin{proof}[Proof of Theorem \ref{thm:main}]
Since we assumed $H\geq \delta$ on $B_R\cap \Sigma$, the curvature estimate Theorem \ref{thm:curv2} gives in particular that $|A| \leq CH \leq \frac{C}{2}|x|$ on $B_{R/2}\cap \Sigma$. Applying the compactness Lemma \ref{lemma:v:u:cmpt} we get that $\Sigma$ is smoothly close to $\mf{S}^k \times \mf{R}^{n-k}$ in $B_{R/2}$. Thus for $R$ sufficiently large we may assume $\lambda(\Sigma) \leq \lambda_0<2$, and $H\geq \delta_0$ on $B_{R/2}\cap \Sigma$. The result then follows from Proposition \ref{prop:v:u:entropy2}. 
\end{proof}

\subsection{Proof of Theorem \ref{thm:H>0}}

For the proof of Theorem \ref{thm:H>0}, we will need the following curvature estimate from Section 3 in \cite{GZ4} (see in particular Theorem 0.4 and Remark 3.6 therein). The key fact was that $LH=H$ on any shrinker $\Sigma$, which implies an almost-stability inequality for $\Sigma$ if the eigenfunction $H$ is positive. 

\begin{lemma}\label{lemma:v:u:positive_mean}
Given $n\leq 6$ and $\alpha>0$, there exists $C=C(n,\alpha)$ so that if $\Sigma^n \subset \mf{R}^{n+1}$ is a shrinker with $\lambda(\Sigma)\leq \alpha$ and $H>0$ on $B_R \cap \Sigma$ for some $R > 2$, then on $B_{R-1}\cap \Sigma$ we have
\begin{equation}
|A| \leq C(1+|x|).
\end{equation}
\end{lemma}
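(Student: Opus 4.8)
\textbf{Proof proposal for Lemma \ref{lemma:v:u:positive_mean}.}

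The plan is to exploit the identity $LH = H$, which says that $H$ is a positive eigenfunction of the drift-stability operator $L = \mathcal{L} + |A|^2 + \tfrac12$ on $B_R \cap \Sigma$. Positivity of the first eigenfunction yields an ``almost-stability'' inequality: for every test function $\varphi$ compactly supported in $B_R \cap \Sigma$,
\begin{equation}
\int_\Sigma \big(|A|^2 + \tfrac12\big)\varphi^2\, e^{-|x|^2/4}\, d\mu \leq \int_\Sigma |\nabla \varphi|^2\, e^{-|x|^2/4}\, d\mu + \int_\Sigma \varphi^2\, e^{-|x|^2/4}\, d\mu,
\end{equation}
obtained in the usual way by writing $\varphi = H\psi$, integrating by parts against the drift measure, and using $LH = H$ to cancel the zeroth-order terms. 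The extra $\int \varphi^2$ on the right (compared with genuine $L$-stability) is harmless. So the first step is to record this inequality precisely, with the correct constants, on $B_R \cap \Sigma$.

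Next I would feed this stability-type inequality into the Schoen–Simon–Yau / Simons-type machinery, exactly as in the classical minimal surface curvature estimates, to obtain an $L^p$ bound on $|A|$ for some $p > 2$ depending on the dimension; here the restriction $n \leq 6$ enters, since the iteration to bootstrap from $L^2$-stability to a pointwise bound only closes in these low dimensions (this is precisely where $n \leq 6$ was used in the minimal surface setting and, correspondingly, in \cite{GZ4}). Concretely, one plugs $\varphi = |A|\eta$ into the stability inequality, uses the Simons identity $\mathcal{L}|A|^2 = |A|^2 - 2|A|^4 + 2|\nabla A|^2$ from Lemma \ref{lemma:simons} together with Kato's inequality $|\nabla A|^2 \geq |\nabla |A||^2$ (and in fact the improved Kato inequality for hypersurfaces, which buys the room to reach $n=6$), and absorbs the resulting $|A|^4$ term using the stability inequality applied to a higher power $\varphi = |A|^{1+q}\eta$. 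Iterating a Moser-type argument on the weighted manifold — being careful that the cutoffs and the Gaussian weight interact correctly, and tracking how the weight contributes the factor $(1+|x|)$ — yields the pointwise estimate $|A|(x) \leq C(1+|x|)$ on $B_{R-1} \cap \Sigma$, with $C = C(n,\alpha)$ through the entropy bound $\lambda(\Sigma) \leq \alpha$ controlling the local volume (via Gaussian density monotonicity) that appears as the ambient constant in the Moser iteration.

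The main obstacle I expect is making the Moser iteration genuinely scale-correctly in the presence of the Gaussian weight $e^{-|x|^2/4}$: the weight is not doubling on large balls, so one cannot directly quote the Euclidean or Riemannian iteration. The fix — and the reason the conclusion has the form $C(1+|x|)$ rather than a uniform constant — is to rescale: near a point $x$ with $|x| = r$ large, the weight behaves like $e^{-r^2/4}$ times a factor that is essentially constant on balls of radius $\sim 1/r$, so after parabolic rescaling by $r$ one runs the iteration on a unit ball with a bounded weight, and the extra factor of $r = |x|$ appears in the gradient term. One must check the entropy bound still controls the rescaled volume ratios uniformly (it does, since entropy is scale-invariant and bounds Gaussian densities at all scales and centers). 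Once this rescaling bookkeeping is set up, the $n \leq 6$ iteration and the absorption of the $|A|^4$ term go through as in the minimal case, and I would simply cite \cite{GZ4} (Theorem 0.4 and Remark 3.6) for the detailed execution rather than reproduce it.
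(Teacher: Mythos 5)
Your proposal matches the paper's approach: the paper does not prove this lemma in-house but cites \cite{GZ4} (Theorem 0.4 and Remark 3.6), noting exactly the mechanism you describe — that $LH=H$ with $H>0$ yields an almost-stability inequality, which feeds into Schoen--Simon--Yau/Simons-type estimates where the restriction $n\leq 6$ enters. Your sketch is a faithful (and more detailed) reconstruction of that cited argument, and your concluding deferral to \cite{GZ4} is precisely what the paper itself does.
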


\begin{remark}Note that for properly embedded self-shrinkers with finite genus in $\mf{R}^3$, Song \cite{Song14} (see also \cite{WL2}) gave the linear growth of the second fundamental form.
\end{remark}

Now we give the proof of Theorem \ref{thm:H>0} via Proposition \ref{prop:v:u:entropy2}.
\begin{proof}[Proof of Theorem \ref{thm:H>0}  using Proposition \ref{prop:v:u:entropy2}]
First, the Harnack inequality gives that either $H\equiv 0$ or $H>0$. If $H\equiv 0$ in $B_R$, then $\Sigma$ is a hyperplane in $B_R$. Thus by the rigidity of the hyperplane (for example, Theorem 0.1 in \cite{GZ4} or Theorem \ref{thm:graph}, or even directly by Brakke's theorem \cite{BK}), $\Sigma$ must be a hyperplane $\mf{R}^n$ if $R$ is sufficiently large. 

Next, we assume $H>0$ in $B_R$. Lemma \ref{lemma:v:u:positive_mean} then gives a curvature estimate on $B_{R-1}\cap \Sigma$. By the compactness of Lemma \ref{lemma:v:u:cmpt}, we can assume that $\Sigma$ is smoothly close to $\mf{S}^k \times \mf{R}^{n-k}$ in $B_{R_1}$ for some $k\geq 0$, where $R_1$ can be taken as large as we wish. If $k=0$, then again the rigidity of the hyperplane means that $\Sigma$ must be a hyperplane, although this is a contradiction since in this case we assume $H>0$ on $B_R\cap \Sigma$. So $k\geq1$, and consequently $H$ is approximately $\sqrt{k/2} $ on $B_{R_1}\cap \Sigma$, then Theorem \ref{thm:H>0} follows directly from Proposition \ref{prop:v:u:entropy2}. 
\end{proof}

\begin{remark}
\label{rmk:direct}
In the above proofs of Theorems \ref{thm:H>0} and \ref{thm:main}, the smooth closeness (obtained via compactness) also implies a bound for $|A|$ on a large ball, so at that point we could also appeal directly to Theorem 0.1 in \cite{CM5}. The compactness Lemma \ref{lemma:v:u:cmpt} can also give a shorter proof of our main rigidity theorem for graphical shrinkers in \cite{GZ4}, but in both cases we feel that the more effective proofs given may provide a more complete understanding.
\end{remark}

\subsection{Proof of the improvement step}
\label{sec:improvement}
In this subsection, we prove Proposition \ref{prop:v:u:improve} by sketching the necessary modifications of the proof of Proposition 2.2 in \cite{CM5}.

As discussed earlier, the central argument is the very tight estimate on the tensor $\tau=A/H$, that decays exponentially in $R$. Thus, our main modification is the following lemma, which removes the curvature bound of Corollary 4.12 in \cite{CM5} by accepting a slightly larger power of $R$, although we still have the exponential decay. 

\begin{lemma} 
  \label{cor:v:u:taubound_1} 
  Given $n$, $\lambda_0$ and $\delta>0$, there exists a
  constant $C_{\tau}>0$ such that if $\lambda (\Sigma) \leq
  \lambda_0$, $R\geq 2$, and
  \begin{itemize}
  \item $B_{R+1} \cap \Sigma$ is smooth with  $H\geq
    \delta>0$,
  \end{itemize}
  then 
  \begin{align}
    \sup_{ B_{R-2} \cap \Sigma } \, \, \left| \nabla \tau \right|^2 +
    R^{-4} \, \left| \nabla^2 \tau \right|^2 \leq C_{\tau} \, R^{3n+4}
    \, \ee^{-R/4} \, .
  \end{align}
\end{lemma}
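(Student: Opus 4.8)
The plan is to follow the proof of Corollary 4.12 in \cite{CM5}, tracking carefully how the curvature bound $|A|\le C$ enters and replacing it everywhere by the weaker bound $|A|\le CR$, which is available to us because $H\ge\delta$ on $B_{R+1}\cap\Sigma$ forces $|A|\le C(n,\delta)\,R\,H$ on, say, $B_{R-1}\cap\Sigma$ by Theorem \ref{thm:curv2} (applied with the ball $B_{R+1}$ and $R'=R+1>2$), and $H$ itself is controlled polynomially in $R$ by the shrinker equation $H=\tfrac12\lb{x,\mf n}$. So first I would record these a priori bounds: on $B_{R-1}\cap\Sigma$ one has $\delta\le H\le \tfrac12(R+1)$ and $|A|\le C_1 R^2$ for $C_1=C_1(n,\delta)$; higher derivative bounds $|\nabla^j A|\le C_j R^{\beta_j}$ on a slightly smaller ball then follow from interior Schauder-type estimates for the (uniformly elliptic, on compact sets) shrinker system, exactly as in Section 4 of \cite{CM5}, only with the constants now allowed to be polynomial in $R$.

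Next I would set up the differential inequality for $\tau=A/H$. As in \cite{CM5}, the key point is that $LH=H$ together with Simons' identity for $|A|^2$ yields a ``drift-Jacobi'' equation for $\tau$ of the schematic form $\mathcal L|\nabla\tau|^2 \le -\tfrac12|\nabla\tau|^2 + (\text{lower order in }\nabla\tau) + (\text{terms involving }\tau,\nabla\tau,A)$, where the crucial $-\tfrac12|\nabla\tau|^2$ comes precisely from the eigenvalue identity and is what ultimately produces the $\ee^{-R/4}$ decay. The coefficients that in \cite{CM5} were bounded by absolute constants are now bounded by polynomials $R^{c}$; I would keep track of the exact powers but not optimize them, since any fixed power is harmless. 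The point is that one still obtains an inequality of the form $\mathcal L g \le -\tfrac12 g + P(R)$ (or the analogous one with $\nabla^2\tau$) on $B_{R-1}\cap\Sigma$ for $g$ a suitable combination of $|\nabla\tau|^2$ and $R^{-4}|\nabla^2\tau|^2$, with $P(R)$ polynomial.

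Then I would run the same weighted maximum-principle/iteration argument as in \cite{CM5}: using a cutoff supported in $B_{R-2}$ and the Gaussian weight $e^{-|x|^2/4}$, one integrates the inequality against $\phi^2 e^{-|x|^2/4}$, uses the entropy bound $\lambda(\Sigma)\le\lambda_0$ to control the relevant integrals of $|\nabla\tau|^2$ and $|A|$-type quantities over $B_{R-1}\cap\Sigma$ by $C\lambda_0$, and obtains an $L^2$-bound of size $R^{a}e^{-R/4}$; Moser iteration (again uniformly elliptic on the compact set in question, with polynomially-large constants) upgrades this to the sup-bound. Chasing the powers of $R$ through the a priori bounds $|A|\le CR^2$, the higher-derivative bounds, the cutoff, and the iteration should land at the stated exponent $3n+4$ — and if it does not land exactly there, one simply enlarges the exponent, since the statement only claims \emph{an} exponent of this form and the proof does not need it to be sharp.

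\textbf{Main obstacle.} The delicate point is not any single estimate but the bookkeeping: I must verify that the $-\tfrac12|\nabla\tau|^2$ term survives intact (it is $R$-independent, coming from $LH=H$) so that the decay rate really is $e^{-R/4}$ and not something weakened by the polynomial coefficients, and that none of the replaced constants secretly involved an \emph{exponential} in $R$ rather than a polynomial — this is exactly the place where using $|A|\le CR$ instead of $|A|\le C$ could in principle destroy the argument, and checking it carefully (particularly in the terms quadratic and cubic in $\tau$ and $A$ that appear when commuting derivatives past $\mathcal L$) is the heart of the modification. Everything else is a routine, if lengthy, re-run of \cite{CM5}.
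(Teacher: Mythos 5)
Your overall architecture (curvature estimate $\Rightarrow$ polynomial a priori bounds $\Rightarrow$ exponentially decaying $L^2$ bound on $\nabla\tau$ $\Rightarrow$ pointwise bound via elliptic theory on small balls) matches the paper's in its first and last steps, but the middle step --- the actual source of the exponential decay --- is misidentified, and as you describe it the argument would not close. You attribute the factor $\ee^{-R/4}$ to a ``$-\tfrac12|\nabla\tau|^2$'' term coming from the eigenvalue identity, leading to an inequality of the schematic form $\mathcal{L}g\le -\tfrac12 g+P(R)$ which you then integrate against $\phi^2\ee^{-|x|^2/4}$. This cannot produce the claimed bound: eigenfunctions of $\mathcal{L}$ with eigenvalue $-1/2$ are merely linearly growing (e.g.\ $\lb{x,V}$), so no exponential decay is encoded in that eigenvalue; and, more fatally, the inhomogeneous term $P(R)$ is supported on the \emph{whole} ball, so after integrating and then dividing out the Gaussian weight on the inner ball (where $\ee^{-|x|^2/4}\ge\ee^{-(R-2)^2/4}$) you would be left with a bound of size $P(R)\,\ee^{+(R-2)^2/4}$ --- exponentially large rather than small.

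The decay in fact comes from a different mechanism, which is the content of Proposition 4.8 in \cite{CM5} that the paper's proof invokes: because $LH=H$ and $LA=A$ carry the \emph{same} eigenvalue, $\tau=A/H$ satisfies the \emph{homogeneous} divergence-form equation $\mathcal{L}_{H^2}\tau=0$, equivalently $\operatorname{div}_{\Sigma}\bigl(H^2\,\ee^{-|x|^2/4}\,\nabla\tau\bigr)=0$. Testing this against $\phi^2\tau$, with $\phi$ a cutoff transitioning in a thin outer annulus, gives a Gaussian-weighted Caccioppoli inequality in which the entire weighted Dirichlet energy of $\tau$ is controlled by $\int|\nabla\phi|^2|\tau|^2H^2\ee^{-|x|^2/4}$, which is supported only in the annulus where the weight is at most $\ee^{-(R-1/2)^2/4}$; the polynomial prefactor $R^{n+4}$ absorbs $|\tau|\le CR$ (from Theorem \ref{thm:curv2}), $H\le CR$ and the volume bound from the entropy. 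The stated $\ee^{-R/4}$ is then nothing but the ratio of Gaussian weights $\ee^{-(R-1/2)^2/4}/\ee^{-(R-1)^2/4}$ once the weight is removed on the inner ball. It is essential that there is no inhomogeneous term in the equation for $\tau$; your route would need that homogeneity restored before it could yield any decay. Your remaining steps do agree with the paper: $|A|\le CRH$ from Theorem \ref{thm:curv2} at the start, and at the end elliptic estimates on balls of radius $1/R^2$ (with constants polynomial in $R$, since $|\nabla\log H^2|\le CR|x|$ makes the drift coefficients grow at most quadratically) to pass from the $L^2$ bound to the pointwise one.
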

\begin{proof}
First, Theorem \ref{thm:curv2} gives there exists a constant $C=C(n, \delta)$ such that $|A|\leq C R H$ in $B_R$. 
Hence, Proposition 4.8 in \cite{CM5} with $s=1/2$ implies that 
\begin{align}
    \int_{B_{R-1/2} \cap \Sigma} \left| \nabla \tau \right|^2 \, \ee^{-
      |x|^2/4 } &\leq C \, R^{n+4} \, \ee^{ - (R-1/2)^2/4} \, .
  \end{align}
  Since $\ee^{- |x|^2/4 } \geq \ee^{- \frac{R^2 -2R + 1}{4} } $ on
  $B_{R-1}$, it follows that
  \begin{align}\label{equ:cor:integral}
    \int_{B_{R-1} \cap \Sigma} \left|\nabla \tau \right|^2 &\leq C \,
    R^{n + 4 } \, \ee^{ - \frac{R}{4} } \, .
  \end{align}
  This gives the desired integral decay on $\nabla \tau$.  We will
  combine this with elliptic theory to get the pointwise bounds. The
  key is that $\tau$ satisfies the elliptic equation $\mathcal{L}_{H^2} \tau =0$ (see Proposition 4.5 in \cite{CM5}), that is,
  \begin{equation}\label{equ:cor:tau}
  \Delta \tau -\frac{1}{2}\lb{x, \nabla \tau} + \lb{\nabla \log H^2, \nabla \tau}=0.
  \end{equation}
Note that we have 
\begin{equation}
|\nabla \log H^2| = \frac{2|\nabla H|}{H} \leq \frac{|A||x|}{H}\leq C R |x|,
\end{equation}  
where we used that $|\nabla H| \leq \frac{1}{2}|A||x|$ and $|A| \leq C R H$. 

Therefore, the two first order terms in the equation (\ref{equ:cor:tau}) come from $x^T$ in
  $\mathcal{L}$ and $\nabla \log H^2$; both grow at most quadratically. Now we can apply elliptic theory on balls of radius $1/R^2$ to
  get for any $p \in B_{R-2} \cap \Sigma$ that
  \begin{align}
    \left( |\nabla \tau|^2 + R^{-4} \, | \nabla^2 \tau |^2 \right)(p)
    \leq C \, R^{2n} \, \int_{B_{\frac{1}{R^2} } (p) \cap \Sigma} | \nabla
    \tau |^2 \, .
  \end{align}
Combining this with the integral bounds (\ref{equ:cor:integral}) gives the lemma.
\end{proof}
Now we sketch the proof of Proposition \ref{prop:v:u:improve}. 

Fix $n$, $\lambda_0>0$, and $\delta_0>0$. Let $R>0$ and
assume that $\Sigma$ is a self-shrinker in $\mf{R}^{n+1}$, $\lambda(\Sigma)\le\lambda_0$ and $H \geq \delta_0$ on $\Sigma\cap B_R$. By Lemma \ref{cor:v:u:taubound_1}, the tensor $\tau= A/H$ satisfies 
\begin{equation}
\left| \nabla \tau \right| + \left|\nabla^2 \tau \right| 
\leq \varepsilon_\tau  \quad \text{ on }B_{R-2} \cap \Sigma,
\end{equation}
where
\begin{align*}
  \varepsilon_{\tau}^2 := C \, R^{3n+8} \, \ee^{-R/4} 
\end{align*}
and the constant $C$ depends only on $n$, $\delta_0$ and $\lambda_0$. As in \cite{CM5}, the key point is that $\epsilon_\tau$ can still be made small for large $R$, due to the decaying exponential factor.

Now fix  small $\varepsilon_0 > 0$, to be chosen as needed, but depending only on $n$. Combining the compactness of Lemma \ref{lemma:v:u:cmpt} with the classification of mean convex shrinkers \cite{CM1}, there exists a constant $R_1=R_1(n, \lambda_0, \delta_0,\varepsilon_0)$ so that if $R \geq R_1$, 
then $B_{5\sqrt{2n}} \cap \Sigma$ is $C^2$ $\varepsilon_0$-close to a cylinder $\mf{S}^k \times \mf{R}^{n-k}$ for some $1\leq k \leq n$. The remainder of Proposition \ref{prop:v:u:improve} follows from the proof of Proposition 2.2 in \cite{CM5}.

\section{Bernstein type theorems}
\label{sec:bernstein}
\subsection{Rigidity of the hyperplane self-shrinker}

In this subsection, we will prove Theorem \ref{thm:graph} which gives a Bernstein type theorem for self-shrinkers in all dimensions. The key is that the positive lower bound of $w=\lb{V, \mf{n}}$ enables us to  obtain a curvature estimate in all dimensions. This is the content of the next theorem.

\begin{theorem}\label{thm:curv_graph}
Given $n$ and $\delta>0$, there exists $C=C(n, \delta)$ so that for any smooth properly embedded self-shrinker $\Sigma^n \subset \mf{R}^{n+1}$  
which satisfies
\begin{itemize}
 \item $w=\lb{V,\mf{n}}\geq \delta$ on $B_R \cap \Sigma$ for some constant unit vector $V$ and $R>2$,
\end{itemize}
we have
\begin{equation}
|A| \leq C, \,\,\,\,\,\,\, \text{on } B_{R/2}\cap \Sigma.
\end{equation}
\end{theorem}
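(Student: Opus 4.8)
The strategy is to mimic the Ecker--Huisken interior curvature estimate \cite[Theorem 3.1]{EH2}, replacing the role of the graph function by the quantity $w=\lb{V,\mf{n}}$ and using the shrinker identity $L\lb{V,\mf{n}}=\frac12\lb{V,\mf{n}}$ from Lemma \ref{lemma:simons} in place of the heat-type equation satisfied by the gradient under mean curvature flow. Set $v=1/w$, so that $v\leq 1/\delta$ on $B_R\cap\Sigma$; from $Lw=\frac12 w$ one computes that $v$ satisfies
\begin{equation}
\Delta v = \frac12\lb{x,\nabla v} + 2\frac{|\nabla v|^2}{v} + \Big(|A|^2-\frac12\Big)v,
\end{equation}
which is formally identical to the equation for $1/H$ used in the proof of Theorem \ref{thm:curv2}. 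This is the point of the argument: once we have this equation, essentially the entire computation leading to (\ref{equ:H}) goes through verbatim with $H$ replaced by $w$, because that computation only used the equation for $v$ and the Simons-type identity $\mathcal{L}|A|^2=|A|^2-2|A|^4+2|\nabla A|^2$.

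\textbf{Key steps.} First I would record the equation for $v=1/w$ above, noting the uniform bound $v\leq 1/\delta$. Second, following Section \ref{sec:curv} exactly, set $h(y)=y/(1-ky)$ with $k=(2v_0^2)^{-1}$, $v_0=1/\delta$, and $f=|A|^2h(v^2)$; the same manipulations (the absorbing inequality on $\lb{\nabla|A|^2,\nabla h}$, the algebraic identities $h-h'v^2=-kh^2$ and $4h''v^2+6(h'-(h')^2v^2/h)=\frac{2k}{(1-kv^2)^2}h$) yield
\begin{equation}
\Delta f \geq \frac{\lb{\nabla h,\nabla f}}{h} + \frac12\lb{x,\nabla f} - f + 2kf^2 + \frac{2k|\nabla v|^2}{(1-kv^2)^2}f.
\end{equation}
Third, use the cutoff $\phi=(\rho^2-|x-x_0|^2)_+^3$ from Lemma \ref{lemma:cut-off}, which makes no reference to mean convexity, and run the maximum principle argument on $B_\rho(x_0)\cap\Sigma$ for $x_0\in B_{R/2}\cap\Sigma$, $\rho=R/2\geq 1$, exactly as in the proof of Theorem \ref{thm:curv2}. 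Since both $\rho$ and $|y_0|$ are now comparable to $R$, the same estimates give $F(y_0)\leq C(\rho^6+R\rho^5+R^2\rho^4)$ and hence, at $x_0$,
\begin{equation}
\frac{\rho^6|A|^2(x_0)}{w^2(x_0)-k}=F(x_0)\leq F(y_0)\leq C(\rho^6+R\rho^5+R^2\rho^4),
\end{equation}
so $|A|(x_0)\leq C(1+R/\rho)\,w(x_0)\leq C$ because $\rho=R/2$ and $w\leq 1$. This is where the argument is actually \emph{easier} than Theorem \ref{thm:curv2}: the a priori bound $w=\lb{V,\mf{n}}\leq |V|=1$ converts the ratio estimate $|A|\leq C(1+R/\rho)w$ into an absolute bound, which is why the dimensional restriction disappears and why we only claim the estimate on $B_{R/2}$ rather than $B_{R-1}$.

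\textbf{Main obstacle.} There is no serious analytic obstacle — the proof is a transcription of Section \ref{sec:curv}. The only thing to be careful about is bookkeeping: one must verify that the equation for $v=1/w$ really is identical in form to the one for $1/H$ (it is, because $L$ acts the same way and the eigenvalue $\frac12$ differs from $1$ only in the harmless constant term, which gets absorbed into the $(|A|^2-\frac12)v$ coefficient — indeed $Lw=\frac12 w$ gives $\Delta w=\frac12\lb{x,\nabla w}-(|A|^2+\frac12)w+\frac12 w=\frac12\lb{x,\nabla w}-|A|^2 w$, matching $\Delta H=\frac12\lb{x,\nabla H}-|A|^2 H$ from $LH=H$), and to keep track of the fact that $|y_0|\leq R$ still holds and $\rho=R/2>1$ so that the cutoff estimates in Lemma \ref{lemma:cut-off} apply. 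Hence the proof consists mainly of pointing to the corresponding computations in Section \ref{sec:curv}.
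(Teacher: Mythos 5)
Your proposal is correct and is exactly the argument the paper intends: the paper's ``proof'' of Theorem \ref{thm:curv_graph} is a one-line remark that it follows from the Ecker--Huisken estimate by running the proof of Theorem \ref{thm:curv2} with $Lw=\tfrac12 w$ in place of $LH=H$, which is precisely what you do, including the observation that $w\leq 1$ upgrades the ratio bound to an absolute bound on $B_{R/2}$.

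One computational slip worth correcting: the equations for $1/w$ and $1/H$ are \emph{not} formally identical. From $LH=H$ one gets $\Delta H=\tfrac12\lb{x,\nabla H}-|A|^2H+\tfrac12 H$ (you dropped the $+\tfrac12 H$), whereas $Lw=\tfrac12 w$ gives $\Delta w=\tfrac12\lb{x,\nabla w}-|A|^2w$; consequently $v=1/w$ satisfies
\begin{equation*}
\Delta v=\frac12\lb{x,\nabla v}+2\frac{|\nabla v|^2}{v}+|A|^2v,
\end{equation*}
with coefficient $|A|^2$ rather than $|A|^2-\tfrac12$. This discrepancy is harmless --- the extra $+\tfrac12 v$ enters the lower bound for $\Delta f$ through the nonnegative term $2|A|^2h'v\cdot\tfrac12 v$, so the analogue of (\ref{equ:H}) in fact holds with $+f$ in place of $-f$ (the step ``$h'v^2|A|^2\leq 2f$'' is not even needed), and your stated, weaker inequality remains valid. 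The rest of the maximum principle argument is correctly bookkept.
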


Theorem \ref{thm:curv_graph} is essentially a corollary of Theorem 3.1 in \cite{EH2}, and the proof is similar to Theorem \ref{thm:curv2} --- the essential component being that $Lw=\frac{1}{2}w$ (see Lemma \ref{lemma:simons}). Combining Theorem \ref{thm:curv_graph} and some ingredients from \cite{GZ4}, we can now prove Theorem \ref{thm:graph}.

\begin{proof}[Proof of Theorem \ref{thm:graph}]
Given $n$, $\lambda_0$ and $\delta$, Theorem \ref{thm:curv_graph} gives a curvature bound $C$. Since   $\Sigma$ is graphical and satisfies a curvature bound, Theorem 2.2 in \cite{GZ4} allows us to make $|A|$ as small as we want  by choosing $R$ sufficiently large. In particular, we can choose $R$ such that $|A|^2 \leq 1/4$ on $B_{R/2}\cap \Sigma$. Now Theorem \ref{thm:graph} follows directly from the compactness of Lemma \ref{lemma:v:u:cmpt}, Brakke's Theorem \cite{BK} (see also \cite{white}) and the fact that any complete shrinker with $|A|^2 <1/2$ is a hyperplane (see \cite{CL}). 
\end{proof}

\subsection{Curvature estimates and a Bernstein type theorem for translators}
In this subsection, we will sketch that the methods used in Section \ref{sec:curv} can also be applied to prove a curvature estimate for translators with positive lower bound for the normal part of a constant vector field. As an immediate corollary, we obtain the Bernstein type theorem for translators which was proved by Bao and Shi  \cite{BS} by using different methods.

Recall that a smooth hypersurface $\Sigma^{n} \subset \mathbf{R}^{n+1}$ is called a \textit{translating soliton}, or \textit{translator} for short, if it satisfies the equation
\begin{equation}
H=-\lb{y,\mathbf{n}},
\end{equation}
where $y\in \mathbf{R}^{n+1}$ is a constant vector. For simplicity, we may assume $y=e_{n+1}$, so that translators satisfy the equation
\begin{equation}
H=-\lb{e_{n+1},\mathbf{n}}.
\end{equation}

The curvature estimate for translators is the following:
\begin{theorem}\label{thm:curv_t}
Given $n$ and $\delta>0$, there exists $C=C(n, \delta)$ so that for any smooth properly embedded translator $\Sigma^n \subset \mf{R}^{n+1}$  
which satisfies
\begin{itemize}
\item $w=\lb{V,\mf{n}} \geq \delta$ on $ B_R(x_0) \cap \Sigma$ for some constant unit vector $V$ and  $x_0 \in \mf{R}^{n+1}$,
\end{itemize}
we have
\begin{equation}
|A|^2(x) \leq C \Big(\frac{1}{R} + \frac{1}{R^2}\Big) w^2(x),\,\,\,\,\,\,\, \text{for all}\,\,\,x\in B_{R/2}(x_0)\cap \Sigma.
\end{equation}
\end{theorem}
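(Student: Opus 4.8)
\textbf{Proof proposal for Theorem \ref{thm:curv_t}.}

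The plan is to mirror the structure of the proof of Theorem \ref{thm:curv2}, replacing the shrinker identity $LH=H$ by the corresponding identity for translators. For a translator, the relevant Jacobi-type operator is $\mathcal{M} = \Delta + \lb{y, \nabla \cdot}$, and one has $\mathcal{M}w = -|A|^2 w$ for $w = \lb{V,\mf{n}}$ and any constant $V$, as well as a Simons-type identity $\mathcal{M}|A|^2 = 2|\nabla A|^2 - 2|A|^4$. Since we assume $w \geq \delta > 0$ on $B_R(x_0) \cap \Sigma$, the quantity $v = w^{-1}$ is bounded above by $v_0 = 1/\delta$ there, and $v$ satisfies an equation of the form $\Delta v = -\lb{y, \nabla v} + 2|\nabla v|^2/v + |A|^2 v$, analogous to the display for the shrinker case. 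First I would record these identities (citing Lemma \ref{lemma:simons} for the formal similarity, but carrying out the translator computation explicitly since the operator and the zeroth-order terms differ).

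Next, following the same device, I would set $h(y) = y/(1-ky)$ with $k = (2v_0^2)^{-1}$, write $f = |A|^2 h(v^2)$, and compute $\Delta f$ using the product rule, the Simons identity, and the equation for $v$. The absorbing inequality handles the cross term $\lb{\nabla|A|^2, \nabla h}$ exactly as before. After inserting the algebraic identities $h - h'v^2 = -kh^2$ and $4h''v^2 + 6(h' - (h')^2 v^2/h) = \frac{2k}{(1-kv^2)^2}h$, one arrives at a differential inequality of the shape
\begin{equation*}
\Delta f \geq \blb{a + y, \nabla f} + 2kf^2 - (\text{bounded terms})\, f,
\end{equation*}
with $a = \nabla h/h$, and $|a|^2$ controlled by $d = \frac{2k|\nabla v|^2}{(1-kv^2)^2}$ as in the shrinker case --- the key point being that $|a|^2 = \frac{2}{kv^2} d$, which here is a \emph{bounded} multiple of $d$ since $v$ is bounded, a simplification compared to Theorem \ref{thm:curv2} where $|a|^2$ acquired a factor $|y_0|^2$.

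Then I would apply the maximum principle with the cutoff $\phi = (\mu)_+^3$, $\mu = \rho^2 - |x-x_0|^2$, on $B_\rho(x_0) \cap \Sigma$ with $\rho = R/2$ (or $\rho$ up to $R$, then restrict). The analogue of Lemma \ref{lemma:cut-off} gives $\Delta\phi = 24\mu|(x-x_0)^T|^2 - 6n\mu^2 - 6\mu^2 H\lb{x-x_0,\mf{n}}$; using the translator equation $|H| = |\lb{e_{n+1},\mf{n}}| \leq 1$ one gets $|\Delta\phi| \leq C(\rho^4 + \rho^3) \leq C\rho^4$ (no $|x|$-dependence, again a simplification). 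Setting $F = \phi f$ and examining its interior maximum $y_0$, at which $\nabla F = 0$, $\Delta F \leq 0$, the differential inequality yields $2k\phi f \leq -\Delta\phi + C\phi + C|\nabla\phi|^2/\phi \leq C\rho^4$, whence $F(y_0) \leq C\rho^4$, and comparing with $F(x_0)$ at an arbitrary $x_0' \in B_{R/2}(x_0) \cap \Sigma$ gives $\rho^6 |A|^2(x_0')/(w^2(x_0')^{-1}\cdot\text{something}) \lesssim \rho^4$, i.e. $|A|^2 \leq C\rho^{-2} w^2 = C(4/R^2) w^2$ after unwinding $h$ (the $1/R$ term appears if one instead only assumes $\rho \geq 1$ and keeps the $\rho^3|x|$ contribution, or more naturally from balancing $\phi \leq \rho^6$ against $|\nabla\phi|^2/\phi \leq C\rho^4$ giving both a $\rho^{-2}$ and—after the cruder estimate—a $\rho^{-1}$ scaling). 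The main obstacle I anticipate is purely bookkeeping: getting the zeroth-order ``bounded terms'' coefficient genuinely bounded (independent of $|x|$) requires that the translator equation bounds $H$ pointwise, which it does, so unlike the shrinker case there is no polynomial-in-$|x|$ growth to track; the remaining care is just in the absorbing-inequality constants and verifying the two $h$-identities, both of which are routine.
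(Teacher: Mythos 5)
Your proposal follows essentially the same route as the paper's (itself only sketched) proof: the translator identities $\mathfrak{L}\lb{V,\mf{n}}=0$ and the Simons-type identity, the same auxiliary function $h(y)=y/(1-ky)$ with $f=|A|^2h(v^2)$, the cubic cutoff, and the maximum principle, with the same simplifications ($|a|^2$ a bounded multiple of $d$, and $|H|\leq 1$ removing the $|x|$-growth). The one bookkeeping point you get slightly wrong is the origin of the $1/R$ term: it comes from the drift term $\lb{e_{n+1},\nabla\phi}$ at the maximum, which contributes $|\nabla\phi|\leq 6\rho^5$ against $\phi\leq\rho^6$ (this is exactly the paper's $F(y_0)\leq C(R^4+R^5)$), not from the sources you speculate about, but this does not affect the validity of the argument.
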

\begin{proof}
Since the proof is very similar to Theorem \ref{thm:curv2}, we will only sketch the argument.

The stability operator $\mathfrak{L}$ for translators is defined by $
\mathfrak{L}= \Delta + \lb{e_{n+1}, \nabla \cdot} + |A|^2$. We have the following identities (see for instance \cite{IR1})
\begin{equation}
\mathfrak{L} \lb{V,\mf{n}} =0\,\,\, \text{ and }\,\,\
\mathfrak{L} |A|^2 = 2 |\nabla A|^2 - |A|^2.
\end{equation}
Set $v=1/w$, $v_0 = 1/\delta$ and $f = |A|^2 h$. Similar computations and estimates as in the proof of Theorem \ref{thm:curv2} give that 
\begin{equation}
\begin{split}
\Delta f & \geq \frac{\lb{\nabla h, \nabla f}}{h} - \lb{e_{n+1}, \nabla f} -2h|A|^4 + 2h' v^2 |A|^4  \\ & + \Big[ 4h''v^2 + 6\Big( h'- \frac{(h')^2 v^2}{h} \Big) \Big] |A|^2 |\nabla v|^2.
\end{split}
\end{equation}
Choosing $h(y) =\frac{y}{1-ky}$, where $k=(2v_0^2)^{-1}$. We then obtain that
\begin{equation}\label{equ:H_t}
\Delta f \geq \frac{\lb{\nabla h,\nabla f}}{h} - \lb{e_{n+1}, \nabla f} + 2kf^2 +\frac{2k |\nabla v|^2}{(1-kv^2)^2} f.
\end{equation}
Let $\phi(x)=((R^2-|x-x_0|^2)_+)^3$. We set $F(x)=\phi (x) f(x)$ and consider its maximum on $B_R(x_0) \cap \Sigma$. Assume $F$ achieves its maximum at some point $y_0 \in B_R(x_0)\cap \Sigma$. 

Using $\nabla F(y_0)=0$, $\Delta F(y_0)\leq 0$ and some estimates of $\phi$, we  have 
\begin{equation}
F(y_0)=\phi (y_0)f(y_0)\leq C( R^4 + R^5 ),
\end{equation} 
where $C$ is a constant depending on $n$ and $\delta$.

Since $F$ achieves its maximum at $y_0$, we have $F(x) \leq F(y_0)$ for all $x\in B_{R/2}(x_0)\cap \Sigma$. This implies for any $x\in B_{R/2}(x_0)\cap \Sigma$
\begin{equation}
\Big(\frac{R}{2}\Big)^6 \frac{ |A|^2 (x)}{w^2(x) - k} \leq F(x) \leq F(y_0) \leq C ( R^4 + R^5 ).
\end{equation}
Now the theorem follows directly. 
\end{proof}

By taking $R$ goes to infinity in Theorem \ref{thm:curv_t}, we obtain the Bernstein type theorem for translators in \cite{BS}.
\begin{theorem}(\cite{BS})\label{thm:graph_t}
Let  $\Sigma^n \subset \mf{R}^{n+1}$ be a smooth complete translator. If there exists a positive constant $\delta$ such that 
\begin{itemize}
\item $w=\lb{V,\mf{n}} \geq \delta $ for some constant unit vector $V$,
\end{itemize}
then $\Sigma$ must be a hyperplane.
\end{theorem}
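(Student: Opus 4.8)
The plan is to obtain this as an immediate consequence of the local curvature estimate of Theorem \ref{thm:curv_t}, by sending $R \to \infty$. The only extra observation needed is that $w = \lb{V, \mf{n}}$ is uniformly bounded: since $V$ is a unit vector and $\mf{n}$ is the unit normal, Cauchy--Schwarz gives $w \leq 1$, and in particular $w^2 \leq 1$, at every point of $\Sigma$.

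Fix an arbitrary point $p \in \Sigma$. Because $w \geq \delta > 0$ holds on all of $\Sigma$, the hypothesis of Theorem \ref{thm:curv_t} is met with $x_0 = p$ for every $R > 2$, whence
\[
  |A|^2(p) \;\leq\; C \Big(\frac{1}{R} + \frac{1}{R^2}\Big)\, w^2(p) \;\leq\; C \Big(\frac{1}{R} + \frac{1}{R^2}\Big),
\]
where $C = C(n,\delta)$ does not depend on $R$ or $p$. Letting $R \to \infty$ forces $|A|(p) = 0$. Since $p$ was arbitrary, $\Sigma$ is totally geodesic, and the only complete totally geodesic hypersurface in $\mf{R}^{n+1}$ is a hyperplane. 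One should note that Theorem \ref{thm:curv_t} is stated for properly embedded translators while we assume only completeness; but $w \geq \delta$ forces $\Sigma$ to be locally a graph over $V^\perp$ with gradient bounded by $\sqrt{\delta^{-2} - 1}$, and a complete hypersurface with such uniform graphicality is automatically proper --- equivalently $\oli{B_R(x_0)} \cap \Sigma$ is compact for each $R$ and $x_0$ --- so the maximum principle argument underlying Theorem \ref{thm:curv_t} applies verbatim.

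I do not expect a serious obstacle here: the entire difficulty has been front-loaded into Theorem \ref{thm:curv_t}, and the crucial feature is the $1/R$ (rather than $O(1)$) decay of its right-hand side, which is precisely what makes the Bernstein conclusion fall out upon letting $R \to \infty$. Were one to want a self-contained argument, the substantive step would be reproving that estimate --- an Ecker--Huisken-type maximum principle for the test function $f = |A|^2 h(v^2)$ with $v = 1/w$ and $h(y) = y/(1-ky)$, $k = (2\delta^{-2})^{-1}$, against the cutoff $\phi = ((R^2 - |x - x_0|^2)_+)^3$, using the translator analogues of the identities in Lemma \ref{lemma:simons} --- but here we take Theorem \ref{thm:curv_t} as given.
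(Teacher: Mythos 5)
Your proposal is correct and follows exactly the paper's route: the paper deduces Theorem \ref{thm:graph_t} by letting $R \to \infty$ in Theorem \ref{thm:curv_t}, using $w \leq 1$ so that $|A| \equiv 0$ and $\Sigma$ is a hyperplane. Your additional remarks on the properness hypothesis and the bound $w^2 \leq 1$ are sensible fillers for details the paper leaves implicit, but they do not change the argument.
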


Recently, Kunikawa \cite{KK2} generalized Theorem \ref{thm:graph_t} to arbitrary codimension.

\bibliographystyle{alpha}
\bibliography{Rigidity_new2}
\end{document}